\renewcommand{\theequation}{\arabic{section}.\arabic{equation}}
\def\vbar{\mathchoice{\vrule height6.3ptdepth-.5ptwidth.8pt\kern- .8pt}
{\vrule height6.3ptdepth-.5ptwidth.8pt\kern-.8pt} {\vrule
height4.1ptdepth-.35ptwidth.6pt\kern-.6pt} {\vrule
height3.1ptdepth-.25ptwidth.5pt\kern-.5pt}}
\def\<{\langle}
\def\>{\rangle}
\def\a{\alpha}
\def\c{\cdot}
\def\o{\otimes}
\def\f{\phi}
\def\r{\rho}
\def\pt{\prec}
\def\gr{\succ}
\def\o{\circ}
\newtheorem{df}{Definition}[section]
\newtheorem{thm}{Theorem}[section]
\newtheorem{cor}{Corollary}[section]
\newtheorem{rem}{Remark}[section]
\newtheorem{prop}{Proposition}[section]
\newtheorem{exa}{Example}[section]
\newtheorem{lem}{Lemma}[section]
\date{}
\begin{document}

\title{Construction of Hom-Pre-Jordan algebras \\and Hom-J-dendriform algebras}
\author{T. Chtioui, S. Mabrouk, A. Makhlouf}
\author{{ Taoufik Chtioui$^{1}$, Sami Mabrouk$^{2}$, Abdenacer Makhlouf$^{3}$ }\\
{\small 1.  University of Sfax, Faculty of Sciences Sfax,  BP
1171, 3038 Sfax, Tunisia} \\
{\small 2.  University of Gafsa, Faculty of Sciences Gafsa, 2112 Gafsa, Tunisia}\\
{\small 3.~ Universit\'e de Haute Alsace, IRIMAS - D\'epartement de Math\'ematiques, 
F-68093 Mulhouse, France}}
 \maketitle
\begin{abstract}
The aim of this work is to introduce and study the notions of Hom-pre-Jordan algebra
and Hom-J-dendriform algebra which generalize Hom-Jordan algebras. Hom-Pre-Jordan algebras are regarded as the underlying algebraic structures of the Hom-Jordan algebras behind the Rota-Baxter operators and  $\mathcal{O}$-operators  introduced in this paper. Hom-Pre-Jordan algebras are also analogues of Hom-pre-Lie algebras  for Hom-Jordan algebras. The anti-commutator of a Hom-pre-Jordan algebra is a Hom-Jordan algebra and the left multiplication operator gives a representation of a Hom-Jordan algebra.
On the other hand, a
Hom-J-dendriform algebra is a Hom-Jordan algebraic analogue of a Hom-dendriform algebra such that the anti-commutator of the sum of the two operations is a Hom-pre-Jordan algebra.
\end{abstract}
{\bf Key words}: Hom-Jordan algebra, Hom-pre-Jordan algebra, Hom-J-dendriform algebra,  $\mathcal{O}$-operator.

 \normalsize\vskip1cm

%%%%%%%%%%%%%%%%%%%%%%%%%%%%%%%%%
\section*{Introduction}
%%%%%%%%%%%%%%%%%%%%%%%%%%%%%%%%%
\renewcommand{\theequation}{\thesection.\arabic{equation}}

In order to study periodicity phenomena in algebraic K-theory,
J.-L. Loday introduced, in 1995,  the notion of dendriform algebra (see \cite{loday}).
Dendriform algebras are algebras with two
operations, which dichotomize the notion of associative algebra.
Later the notion of tridendriform algebra were
introduced by Loday and Ronco in their study of polytopes and Koszul duality (see \cite{tridend}).
In 2003 and in order to determine the algebraic structures behind a pair of commuting Rota-Baxter operators (on an associative algebra), Aguiar and Loday introduced the notion of quadri-algebra \cite{quadrialgebras}.  We refer to this kind of  algebras as Loday algebras. Thus, it is  natural to consider  the
Jordan algebraic analogues of Loday algebras as well as  their  Lie
algebraic analogues.

Jordan algebras were introduced in the context of
axiomatic quantum mechanics in 1932 by the physicist P. Jordan  and appeared in many areas of mathematics such as differential geometry, Lie theory, Physics  and analysis (see \cite{chu}, \cite{jacobson} and \cite{Upmeier} for more details).
The Jordan algebraic analogues of Loday algebras were considered.
Indeed, the notion of pre-Jordan algebra as a Jordan
algebraic analogue of a pre-Lie algebra was introduced in \cite{bai1}.
A pre-Jordan algebra is a vector space $A$ with a bilinear
multiplication $\c $ such that the product $x \circ y = x\c y+y \c x$ endows
$A$ with the structure of a Jordan algebra, and the left multiplication operator
$L(x) : y \mapsto x \c y$ define a representation of this Jordan algebra on $A$.
In other words, the product  $x \c y$ satisfies the following identities:
\begin{align*}
& (x \circ y)\c(z\c u)+ (y \circ z)\c(x\c u)+ (z \circ x)\c(y\c u)\\
&= z\c[(x \circ y) \c u]+ x\c[(y \circ z) \c u]+ y\c[(z \circ x) \c u],\\
& x \c [y \c (z \c u)] + z \c [y \c (x \c u)] + [(x \circ z) \circ y] \c u \\
&= z\c[(x \circ y) \c u]+ x\c[(y \circ z) \c u]+ y\c[(z \circ x) \c u]
\end{align*}
In order to find a dendriform algebra whose anti-commutator is a pre-Jordan
algebra,  Bai and Hou  introduced the notion of J-dendriform algebras  \cite{bai2}. They are, also related to pre-Jordan
algebras in the same way as pre-Jordan algebras are related to Jordan algebras.
They showed that an $\mathcal{O}$-operator (specially a Rota-Baxter operator of weight zero) on a pre-Jordan algebra or two commuting Rota-Baxter operators on a Jordan algebra give a J-dendriform algebra. In addition, They showed the relationships between J-dendriform algebras and Loday algebras especially quadri-algebras.

Hom-type algebras have been  investigated by many authors. In general,
Hom-type algebras are a kind of algebras in  which the  usual identities defining the structures are twisted by homomorphisms.
Such algebras appeared in the ninetieth in examples of $q$-deformations of the Witt and the Virasoro algebras. Motivated by these examples and their generalization, Hartwig, Larsson and Silvestrov introduced and studied  Hom-Lie algebras in \cite{hls}.

The notion of Hom-Jordan algebras was first introduced by A. Makhlouf in  \cite{mak1} with a connection to Hom-associative algebras and then   D. Yau modified slightly the definition in   \cite{yau1}  and established their relationships with Hom-alternative algebras.

We aim in this paper to introduce and study   Hom-pre-Jordan algebras and Hom-J-dendriform algebras generalizing, then, Hom-Jordan  algebras, pre-Jordan algebras and J-dendriform algebras.
The anti-commutator of a Hom-pre-Jordan algebra is a Hom-Jordan algebra and the left multiplication operators give a representation of this Hom-Jordan algebra, which is the beauty of such a structure.
Similarly, a Hom-J-dendriform algebra gives rise to a Hom-pre-Jordan algebra  and a Hom-Jordan algebra in the same way as Hom-dendriform algebra gives rise to Hom-pre-Lie algebra and Hom-Lie algebra (see \cite{makhloufDendriform}).

The paper is organized as follows. In Section 1, we recall some basic facts about Hom-Jordan algebras.   In Section 2, we introduce the notion of Hom-pre-Jordan algebra, provide  some properties and defien the notion of a bimodule of a Hom-pre-Jordan algebra. Moreover, we develop  some constructions theorems.
In Section 3, we introduce the notion of Hom-J-dendriform algebra and
study some of their fundamental properties  in terms of the
$\mathcal{O}$-operators of pre-Jordan algebras.

Throughout this paper $\mathbb{K}$ is a field of characteristic $0$ and all vector spaces are over $\mathbb{K}$.
We refer to a  Hom-algebra  as a tuple $(A,\mu,\alpha)$, where $A$ is a vector space, $\mu$ is a multiplication and $\alpha$ is a linear map. It is said to be regular if $\alpha$ is invertible. A Hom-associator is a trilinear map $as_{\alpha}$ defined for all $x,y,z\in A$ by
$as_{\alpha}(x,y,z)=(xy)\a(z)-\alpha(x) (yz)$.
When there is no ambiguity, we denote for simplicity the multiplication and composition by concatenation.

%%%%%%%%%%%%%%%%%%%%%%%%%%%%%%%%%%%%%%%%%%%
\section{Basic results on Hom-Jordan algebras}
%%%%%%%%%%%%%%%%%%%%%%%%%%%%%%%%%%%%%%%%%%%%
In this section, we recall some basics about Hom-Jordan algebras introduced in \cite{yau1} and introduce the  notion of a representation of a Hom-Jordan algebra.
\begin{df}
A Hom-Jordan algebra is a Hom-algebra $(A,\circ ,\a)$  satisfying
the following conditions
\begin{align}\label{Hom jordan alg}
& x \circ y= y \circ x,\\
& as_\a (x \circ x,\a(y),\a(x))=0, \label{hom jordan identity}
\end{align}
for all $x,y\in A$.

\end{df}
 \begin{rem}
Since  the characteristic of $\mathbb{K}$ is $0$, condition \eqref{hom jordan identity} is equivalent to the following identity (for all $x,y,z,u \in A$)
 \begin{equation}\label{somme cyc}
 \circlearrowleft_{x,y,z} as_\a (x \circ y, \a(u), \a(z))=0,
 \end{equation}
  or equivalently,
  \begin{align}\label{somme cyc2}
& ((x \circ y) \circ \a(u)) \circ \a^2(z)  + ((y \circ z) \circ \a(u)) \circ \a^2(x) +((z \circ x) \circ \a(u)) \circ \a^2(y) \nonumber\\
&= \a(x \circ y) (\a(u) \circ \a(z)) + \a(y \circ z) (\a(u) \circ \a(x)) + \a(z \circ x) (\a(u) \circ \a(y)).
 \end{align}

 \end{rem}

 %representation of  hom jordan algebra
 \begin{df}
Let  $(A, \circ, \a) $ be a Hom-Jordan algebra and $V$ be a vector space. Let $\r: A \to gl(V)$ be a linear map and $\f: V \to V$  be an algebra morphism. Then $(V,\r,\f)$ is called a representation (or a module) of $(A,\circ, \a)$, if for any $x,y,z \in A$
\begin{align}\label{representation}
& \f  \r(x)= \r(\a(x))  \f , \\
&\r(\a^2(x))\r(y\circ z)  \f + \r(\a^2(y))\r(z\circ x)  \f  + \r(\a^2(z))\r(x\circ y)  \f  \nonumber \\
&= \r(\a(x)\circ \a(y))\r(\a(z)) \f  +  \r(\a(y)\circ \a(z))\r(\a(x)) \f +  \r(\a(z)\circ \a(x))\r(\a(y)) \f, \\
& \r((x\circ y)\circ \a(z)) \f^2+ \r(\a^2(x))\r(\a(z))\r(y)+\r(\a^2(z))\r(\a(y))\r(x) \nonumber \\
&= \r(\a(x)\circ \a(y))\r(\a(z)) \f +  \r(\a(y)\circ \a(z))\r(\a(x)) \f +  \r(\a(z)\circ \a(x))\r(\a(y)) \f.
\end{align}
 \end{df}

\begin{prop}\label{semidirect product hom jordan}
Let $(A,\circ,\a)$ be a Hom-Jordan algebra, then $(V,\r,\f)$ is a representation  of $A$  if and only if there exists a Hom-Jordan algebra structure on the direct sum $A\oplus V$ (the semi-direct sum of the underlying vector spaces of $A$ and $V$) given by
\begin{align}\label{semidirect product}
& (x+u)\ast (y+v)= x\circ y + \r(x)v +\r(y)u, \quad \forall x,y \in A,\ u,v \in V.
\end{align}
We denote it by $A\ltimes_{\r,\f} V$ or simply $A\ltimes V$.
\end{prop}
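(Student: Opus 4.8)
The statement is a standard "representation $\Leftrightarrow$ semidirect product" equivalence, so the plan is to set up the candidate multiplication $\ast$ on $A\oplus V$ given by \eqref{semidirect product} together with the twisting map $\a\oplus\f$, and then check that the three axioms of a Hom-Jordan algebra for $(A\oplus V,\ast,\a\oplus\f)$ hold if and only if \eqref{representation} and its two companions hold. First I would verify commutativity: since $x\circ y=y\circ x$ and the cross-terms $\r(x)v+\r(y)u$ are manifestly symmetric in the two arguments $x+u$ and $y+v$, commutativity of $\ast$ is immediate and contributes no condition on $\r$. Next I would confirm that $\a\oplus\f$ is multiplicative for $\ast$, i.e. $(\a\oplus\f)\big((x+u)\ast(y+v)\big)=(\a\oplus\f)(x+u)\ast(\a\oplus\f)(y+v)$; the $A$-component holds because $\a$ is multiplicative on $(A,\circ)$, and the $V$-component reduces exactly to the equivariance relation $\f\r(x)=\r(\a(x))\f$, which is the first displayed line of \eqref{representation}. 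So the intertwining axiom for a representation is equivalent to multiplicativity of the twisting map.

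The substance of the proof is the Hom-Jordan identity. Rather than working with \eqref{hom jordan identity} directly, I would use the polarized, fully multilinear form \eqref{somme cyc2}, which holds for all $x,y,z,u$ and is equivalent to the defining identity in characteristic $0$. The idea is to substitute generic elements $x+u_1,\,y+u_2,\,z+u_3,\,w+u_4$ of $A\oplus V$ into \eqref{somme cyc2} with the product $\ast$ and the map $\a\oplus\f$, then expand and project onto the $A$-component and the $V$-component separately. The $A$-component recovers precisely \eqref{somme cyc2} for $(A,\circ,\a)$ and so is automatically satisfied; it imposes no new condition. The $V$-component is where the representation axioms appear. Because $\ast$ is linear and the $V$-valued contributions are linear in each $V$-slot, the resulting identity in $V$ decomposes, by multilinearity, into pieces according to how many of the four inputs carry a nonzero $V$-component. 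I would organize the verification by this grading: the terms that are linear in a single $v_i$ collect the left-multiplication operators $\r$ acting on the various $\circ$-products and iterated $\a$-images, and matching these coefficients yields exactly the second and third identities of \eqref{representation}. The pieces that are multilinear of higher degree in the $V$-slots vanish identically because $V\ast V=0$ (two elements of $V$ multiply to zero under $\ast$), so no further constraints arise.

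The main obstacle, and the only genuinely delicate part, is bookkeeping: one must carefully track the positions of $\a$, $\a^2$, and $\f$, $\f^2$ produced when \eqref{somme cyc2} is applied to $\ast$, because in the Hom setting the twisting maps do not commute past $\r$ freely — they are governed by the equivariance relation $\f\r(x)=\r(\a(x))\f$ established in the first step. The correct strategy is to use that relation repeatedly to push all $\f$'s to the right (or all to a uniform position) so that the $V$-component coefficients can be compared term by term with the two operator identities in \eqref{representation}. Once the grouping of terms that are linear in each fixed $V$-input is read off and the equivariance relation is applied to normalize the placement of $\f$, the coefficient-matching is exact, giving the two nontrivial representation axioms; conversely, substituting those axioms back shows the $V$-component of \eqref{somme cyc2} holds, which closes the equivalence. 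No further ingredients beyond \eqref{somme cyc2}, the multilinearity of $\ast$, and the equivariance relation are needed.
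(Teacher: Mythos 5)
The paper states this proposition without any proof (it is followed immediately by an example), so there is no argument of the authors to compare yours against; it must be judged on its own. Your strategy --- expand the linearized identity \eqref{somme cyc2} on $A\oplus V$, observe that the $A$-component is automatically the identity for $(A,\circ,\a)$, that $V\ast V=0$ kills every term of degree at least two in the $V$-slots, and that the two inequivalent $V$-linear projections (the $V$-entry placed in the $u$-slot, or in one of the cyclically permuted slots $x,y,z$) give the two operator identities, with the relation $\f\r(x)=\r(\a(x))\f$ used to normalize the placement of $\f$ --- is the standard argument for this kind of statement and is structurally sound.

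Two points, however, do not close as written. First, you obtain the intertwining condition \eqref{representation} by demanding that $\a\oplus\f$ be multiplicative for $\ast$, calling this one of ``the three axioms of a Hom-Jordan algebra''. In this paper it is not an axiom: a Hom-algebra carries an arbitrary linear twist map, and multiplicativity is singled out as an \emph{extra} property (see the last sentence of Definition \ref{hom prejordan}). Under the paper's literal definitions, the direction ``Hom-Jordan structure on $A\oplus V$ $\Rightarrow$ $(V,\r,\f)$ is a representation'' cannot produce \eqref{representation} by your route, since commutativity and \eqref{hom jordan identity} for $(A\oplus V,\ast,\a\oplus\f)$ never force $\f\r(x)=\r(\a(x))\f$. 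Your proof (and indeed the proposition itself) is valid only under the tacit convention that $\a$, and hence $\a\oplus\f$, is multiplicative; the same convention is needed a second time when matching coefficients, because the expansion naturally produces operators $\r(\a(x\circ y))$ while the representation axioms contain $\r(\a(x)\circ\a(y))$. This hypothesis should be stated explicitly. Second, the claimed \emph{exact} coefficient matching fails in one term: the $V$-projection of \eqref{somme cyc2} yields the summand $\r(\a^2(y))\r(\a(z))\r(x)$, whereas the third displayed condition in the paper's definition of a representation has $\r(\a^2(z))\r(\a(y))\r(x)$; these are neither equal nor equivalent modulo the remaining axioms. This is in fact a typo in the paper (the printed condition is inconsistent with \eqref{hom prejordan2} under $\r=L$, $\f=\a$, and with the classical Jordan-module axioms, both of which agree with your expansion), but a proof asserting exact term-by-term agreement has to detect and address it rather than assert it.
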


 \begin{exa}
Let $(A,\circ,\a)$ be a Hom-Jordan algebra.  Let $ad :A \to gl(A)$  defined by  $ad(x)(y)=x\circ y=y\circ x$, for any $x,y \in A$.  Then $(A,ad,\a)$ is a representation of $(A,\circ,\a)$ called the adjoint  representation of $A$.
 \end{exa}

\begin{df}
Let $(A,\circ,\a)$ be a Hom-Jordan algebra and $(V,\r,\f)$ be representation.
A linear map $T: V \to A$ is called an $\mathcal{O}$-operator of $A$ associated to $\r$ if it satisfies
\begin{align}\label{O-operator of Jordan algebra}
 & T \f = \a T,\\
 & T(u) \circ T(v)=T \big( \r(T(u))v + \r(T(v))u \big),\quad \forall u,v \in V.
\end{align}
\end{df}
\begin{exa}
Let $(A,\circ,\a)$ be a Hom-Jordan algebra then a Rota-Baxter operator of weight zero is just an $\mathcal{O}$-operator on $A$ associated to the adjoint representation $(A,ad,\a)$.

\end{exa}

%%%%%%%%%%%%%%%%%%%%%%%%%%%%%%%%%%%%%%%%%%%%
\section{Hom-Pre-Jordan algebras}
%%%%%%%%%%%%%%%%%%%%%%%%%%%%%%%%%%%%%%%%%%%%
In this section, we generalize the notion of pre-Jordan algebra introduced in \cite{bai1} to the Hom case and study the relationships with Hom-Jordan algebras, Hom-dendriform algebras and Hom-pre-alternative algebras in terms of $\mathcal{O}$-operators of Hom-Jordan algebras.
%--------------------------------------------------------
\subsection{Definition and basic properties}
%--------------------------------------------------------
\begin{df}\label{hom prejordan}
A Hom-pre-Jordan algebra is a Hom-algebra  $(A, \c,\a)$  satisfying, for any $x,y,z,u \in A$, the following identities
\begin{align}\label{hom prejordan1}
& [\a(x) \circ \a(y)]\c[\a(z)\c \a(u)]+ [\a(y) \circ \a(z)]\c[\a(x)\c \a(u)]+ [\a(z) \circ \a(x)]\c[\a(y)\c \a(u)] \nonumber \\
&= \a^2(x)\c[(y \circ z) \c \a(u)]+ \a^2(y)\c[(z \circ x) \c \a(u)]+\a^2(z)\c[(x \circ y) \c \a(u)],\\
& [(x\circ z)\circ \a(y)]\c\a^2(u)+ \a^2(x)\c[\a(y)\c(z\c u)]+\a^2(z)\c[\a(y)\c(x\c u)] \nonumber\\
\label{hom prejordan2}&=\a^2(x)\c[(y \circ z) \c \a(u)]+ \a^2(y)\c[(z \circ x) \c \a(u)]+\a^2(z)\c[(x \circ y) \c \a(u)],
\end{align}
where $x \circ y= x \c y+ y \c x$.
When $\a$ is an algebra morphism,  the Hom-pre-Jordan algebra $(A, \c,\a)$ will be called multiplicative.
\end{df}
\begin{rem}
Eqs. \eqref{hom prejordan1} and \eqref{hom prejordan2} are equivalent to the following equations
$($for any $x,y,z,u\in A)$ respectively
\begin{align}\label{EqvHomPreJordan1}
 & (x,y,z,u)_\alpha^1+(y,z,x,u)_\alpha^1+(z,x,y,u)_\alpha^1   +(y,x,z,u)_\alpha^1+(x,z,y,u)_\alpha^1+(z,y,x,u)_\alpha^1=0,\\
&as_\alpha(\a(x),\a(y),z\c u)-as_\alpha(x\c z,\a(y),\a(u))+(y, z,x,u)_\alpha^2 +(y,x,z,u)_\alpha^2\nonumber\\&\label{EqvHomPreJordan2}+as_\alpha(\a(z),\a(y),x\c u)-as_\alpha(z\c x,\a(y),\a(u))=0,
\end{align}where
\begin{equation*}
  (x,y,z,u)_\alpha^1=[\a(x) \c  \a(y)]\c[\a(z)\c \a(u)]-\a^2(x)\c[(y \c z) \c \a(u)],
\end{equation*} \begin{equation*}
  (x, y,z,u)_\alpha^2  =[\a(x) \c\a(y)]\c[\a(z)\c \a(u)]-[\a(x)\c(y\c z)]\c\a^2(u),
\end{equation*}
\end{rem}
\begin{rem}
Any Hom-associative algebra is a Hom-pre-Jordan algebra.
\end{rem}

\begin{prop}
Let $(A,\c,\a)$ be a Hom-pre-Jordan algebra. Then the product given by
\begin{align}\label{prejordan=>jordan}
 x \circ y=x\c y+y\c x
\end{align}
defines a Hom-Jordan algebra structure  on $A$, which is called the associated Hom-Jordan algebra
of $(A,\c,\a)$ and $(A,\c,\a)$ is also called a compatible Hom-pre-Jordan algebra structure
on the Hom-Jordan algebra $(A, \circ,\a)$.
\end{prop}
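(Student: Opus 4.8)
The plan is to verify the two conditions of Definition~1.1 for $(A,\circ,\a)$. Commutativity \eqref{Hom jordan alg} is immediate from the definition of $\circ$, since $x\circ y=x\c y+y\c x=y\c x+x\c y=y\circ x$. All the content lies in the Jordan identity, and since $\mathrm{char}\,\mathbb{K}=0$ I would establish it in the linearized cyclic form \eqref{somme cyc2},
\[
\circlearrowleft_{x,y,z} as_\a(x\circ y,\a(u),\a(z))=0,
\]
where $as_\a$ now denotes the Hom-associator of the product $\circ$.

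First I would unfold this expression completely into the generating product $\c$, replacing each $a\circ b$ by $a\c b+b\c a$ and using $\a(x\circ y)=\a(x)\circ\a(y)$ wherever $\a$ meets a $\circ$-product. Each summand $((x\circ y)\circ\a(u))\circ\a^2(z)$ then breaks into eight $\c$-monomials of total $\a$-weight three (the distinguished letter $u$ carrying one $\a$, the cycled letter carrying $\a^2$), and each summand $\a(x\circ y)\circ(\a(u)\circ\a(z))$ into eight monomials of weight four, giving forty-eight monomials in all. This exposes the splitting that drives the proof: the right-hand side of \eqref{somme cyc2} is made up exactly of the weight-four monomials, its left-hand side of the weight-three ones.

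The two Hom-pre-Jordan axioms are shaped to match these blocks, since their sides carry precisely those weights: the left-hand side of \eqref{hom prejordan1} (equivalently \eqref{EqvHomPreJordan1}) is weight four, whereas the common right-hand side of \eqref{hom prejordan1} and \eqref{hom prejordan2} and the left-hand side of \eqref{hom prejordan2} (equivalently \eqref{EqvHomPreJordan2}) are weight three. I would therefore use \eqref{hom prejordan1} to rewrite the weight-four cyclic sum $\circlearrowleft\a(x\circ y)\circ(\a(u)\circ\a(z))$ as that common weight-three right-hand side, after resolving the inner brackets $\a(u)\circ\a(z)$ into $\c$-products and folding the cycle; what is left is a purely weight-three identity between the left-hand side of \eqref{somme cyc2} and the common right-hand side, which is delivered by \eqref{hom prejordan2} once it is summed over the cyclic permutations of $x,y,z$ (using its $x\leftrightarrow z$ symmetry and the commutativity of $\circ$ to bring it into cyclic shape). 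The expanded difference is thus a $\mathbb{K}$-linear combination of the vanishing expressions \eqref{hom prejordan1} and the cyclically summed \eqref{hom prejordan2}, hence zero.

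The commutativity and the reduction to \eqref{somme cyc2} are painless; the genuine obstacle is this last stage, where the forty-eight monomials must be tracked with their exact $\a$-placements and the coefficients of \eqref{hom prejordan1} and of the three cyclic copies of \eqref{hom prejordan2} pinned down so that every monomial cancels. The sole point beyond bookkeeping is the identity $\a(x\circ y)=\a(x)\circ\a(y)$: it is what aligns the weight-four monomials of \eqref{somme cyc2} with those of \eqref{hom prejordan1}, and it is the only place where the morphism property of $\a$ is invoked.
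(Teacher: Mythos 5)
Your proof is, in substance, the paper's proof. The authors likewise pass to the linearized identity \eqref{somme cyc2}, expand both sides into $\cdot$-monomials, and cancel by matching blocks against the two axioms: their decomposition $l_1+l_2+l_3+l_4=r_1+r_2+r_3+r_4$ with $l_i=r_i$ is exactly your scheme, where $r_1,\dots,r_4$ are your weight-four blocks (instances of the left side of \eqref{hom prejordan1} for permuted arguments), $l_1$ is one copy of the common right-hand side, and $l_2,l_3,l_4$ are the three instances of the left side of \eqref{hom prejordan2} (with $u$ in the middle slot and $x,y,z$ permuted) that deliver your weight-three identity.

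There is, however, one step that must be called a gap, and you put your finger on it yourself: the use of $\alpha(x\circ y)=\alpha(x)\circ\alpha(y)$. The proposition does not assume that $\alpha$ is an algebra morphism --- by Definition \ref{hom prejordan} that is the extra property defining the \emph{multiplicative} case --- yet the Hom-Jordan identity \eqref{hom jordan identity}, hence \eqref{somme cyc2}, genuinely involves $\alpha$ applied to products, while the axioms \eqref{hom prejordan1}--\eqref{hom prejordan2} only ever apply $\alpha$ to single variables, so no amount of bookkeeping can bridge the two without that hypothesis. Indeed the statement as written is false in the non-multiplicative case: take $A=\mathrm{span}\{e_1,e_2,e_3\}$ with the only nonzero products $e_1\cdot e_1=e_2$, $e_1\cdot e_2=e_2\cdot e_1=e_3$, and $\alpha(e_1)=\alpha(e_2)=e_1$, $\alpha(e_3)=0$. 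Here every iterated $\cdot$-product of four elements vanishes whatever the bracketing (double products lie in $\mathrm{span}\{e_2,e_3\}$, triple products in $\mathrm{span}\{e_3\}$, and $\mathrm{span}\{e_2,e_3\}\cdot\mathrm{span}\{e_2,e_3\}=\mathrm{span}\{e_3\}\cdot A=A\cdot\mathrm{span}\{e_3\}=0$); since, after expanding $\circ$, every term of \eqref{hom prejordan1} and \eqref{hom prejordan2} is such a fourfold product with $\alpha$ acting only on the variables, both axioms hold trivially and $(A,\cdot,\alpha)$ is a Hom-pre-Jordan algebra. But $as_{\alpha}(e_1\circ e_1,\alpha(e_1),\alpha(e_1))=(2e_2\circ e_1)\circ e_1-\alpha(2e_2)\circ(e_1\circ e_1)=0-8e_3\neq 0$, precisely because $\alpha$ applied to the product $e_1\circ e_1=2e_2$ resurrects it as $2e_1$. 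So your argument proves the proposition only for multiplicative Hom-pre-Jordan algebras. You are in good company: the paper's own proof makes the identical move silently, writing the right-hand side of the Jordan identity as $(\alpha(x)\circ\alpha(y))(\alpha(u)\circ\alpha(z))$ where \eqref{somme cyc2} requires $\alpha(x\circ y)(\alpha(u)\circ\alpha(z))$; your write-up is the more candid of the two, in that it isolates exactly where the morphism property enters.
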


\begin{proof}
Let $x,y,z,u\in A$,  it is easy to show that
 \begin{align*}
& ((x \circ y) \circ \a(u)) \circ \a^2(z)  + ((y \circ z) \circ \a(u)) \circ \a^2(x) +((z \circ x) \circ \a(u)) \circ \a^2(y) \nonumber\\
&= (\a(x) \circ \a(y)) (\a(u) \circ \a(z)) + (\a(y) \circ \a(z)) (\a(u) \circ \a(x)) +( \a(z )\circ\a( x)) (\a(u) \circ \a(y)).
 \end{align*}
if and only if $l_1+l_2+l_3+l_4=r_1+r_2+r_3+r_4$ where
\begin{align*}
  l_1 &=\circlearrowleft_{x,y,z}\a^2(x)\c[(y \circ z) \c \a(u)], \\
  l_2 & =[(x\circ y)\circ \a(u)]\c\a^2(z)+ \a^2(x)\c[\a(u)\c(y\c z)]+\a^2(y)\c[\a(u)\c(x\c z)] ,\\
  l_3 & =[(x\circ z)\circ \a(u)]\c\a^2(y)+ \a^2(x)\c[\a(u)\c(z\c y)]+\a^2(z)\c[\a(u)\c(x\c u)] ,\\
  l_4 & =[(y\circ z)\circ \a(u)]\c\a^2(x)+ \a^2(y)\c[\a(u)\c(z\c x)]+\a^2(z)\c[\a(u)\c(y\c x)] ,
\end{align*}

and
\begin{align*}
  r_1&=\circlearrowleft_{x,y,z}(\a(x)\circ\a(y))\cdot(\a(z)\cdot\a(u)),  \\
   r_2&=\circlearrowleft_{x,y,u}(\a(x)\circ\a(y))\cdot(\a(u)\cdot\a(z)),  \\
   r_3&=\circlearrowleft_{x,z,u}(\a(x)\circ\a(z))\cdot(\a(u)\cdot\a(y)),\\
   r_4&=\circlearrowleft_{y,z,u}(\a(y)\circ\a(z))\cdot(\a(u)\cdot\a(x)).
\end{align*}
Now using Definition \ref{hom prejordan}, we can easily see that $l_i=r_i$, for $i=1,...,4$.
\end{proof}
The following conclusion can be obtained straightforwardly using the previous proposition.
\begin{prop}
Let $(A,\c,\a)$ be a Hom-algebra.
Then $(A,\c,\a)$ is a Hom-pre-Jordan algebra if and only if $(A, \circ,\a)$ defined by Eq. \eqref{prejordan=>jordan}
is a Hom-Jordan algebra and $(A,L,\a)$ is a representation of $(A, \circ,\a)$, where $L$ denotes the left multiplication operator on $A$.
\end{prop}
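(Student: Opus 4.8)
The plan is to recognise the two defining identities of a Hom-pre-Jordan algebra as the two substantial axioms of a representation, specialised to $\r=L$ and $\f=\a$ (where $L(x)y=x\c y$), and to run each resulting equivalence in both directions simultaneously. I evaluate every operator identity on an arbitrary $u\in A$. The purely structural parts are immediate: by the preceding proposition the product $\circ$ of Eq. \eqref{prejordan=>jordan} is a Hom-Jordan algebra as soon as $(A,\c,\a)$ is Hom-pre-Jordan, and this Hom-Jordan structure is exactly the standing hypothesis in the converse direction. Moreover the first representation axiom $\f\,\r(x)=\r(\a(x))\,\f$ becomes, at $\r=L$, $\f=\a$, the identity $\a(x\c u)=\a(x)\c\a(u)$, i.e. the multiplicativity of $\a$ for $\c$; so this axiom simply encodes the compatibility of $\a$ and contributes nothing beyond it.

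The key point is a shared right-hand side. Both \eqref{hom prejordan1} and \eqref{hom prejordan2} have the same right member $\circlearrowleft_{x,y,z}\a^2(x)\c[(y\circ z)\c\a(u)]$, and the second and third representation conditions, once evaluated at $\r=L$, $\f=\a$ on $u$, share the single right member $\circlearrowleft_{x,y,z}[\a(x)\circ\a(y)]\c[\a(z)\c\a(u)]$. I would first expand the second representation condition: its left member $\circlearrowleft_{x,y,z}\r(\a^2(x))\r(y\circ z)\f$ applied to $u$ becomes $\circlearrowleft_{x,y,z}\a^2(x)\c[(y\circ z)\c\a(u)]$, and its right member becomes $\circlearrowleft_{x,y,z}[\a(x)\circ\a(y)]\c[\a(z)\c\a(u)]$. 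Thus this condition is precisely \eqref{hom prejordan1} with the two sides interchanged, and it records exactly the equality of the two ``common right members'' above.

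Granting \eqref{hom prejordan1} (equivalently the second representation condition), I would then match the third representation condition with \eqref{hom prejordan2}. Expanding its left member at $\r=L$, $\f=\a$ on $u$ reproduces the left member of \eqref{hom prejordan2}, while its right member is the second common expression, which by the previous step coincides with the right member of \eqref{hom prejordan2}; hence the third condition is equivalent to \eqref{hom prejordan2}. Reading the three equivalences forward turns a Hom-pre-Jordan algebra into a representation $(A,L,\a)$ of $(A,\circ,\a)$; reading them backward, together with the assumed Hom-Jordan structure on $\circ$, recovers \eqref{hom prejordan1} and \eqref{hom prejordan2} and hence Definition \ref{hom prejordan}. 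The only delicate point, and where I expect the bulk of the work, is the index bookkeeping inside the cyclic sums: one must track the order of composition of the operators $L(\cdot)$ when acting on $u$ and use the commutativity of $\circ$ to see that the right members are invariant under the relevant permutations, so that the left members line up term by term.
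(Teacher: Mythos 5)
Your strategy is the one the paper intends (its own ``proof'' is the single word \emph{straightforward}): specialize the representation axioms at $\rho=L$, $\phi=\alpha$, evaluate on an arbitrary $u$, and match the resulting identities with \eqref{hom prejordan1}--\eqref{hom prejordan2} through the common right-hand sides. Your treatment of the second representation axiom is correct: at $\rho=L$ it is literally \eqref{hom prejordan1} with the two sides interchanged. Your reading of the first axiom as multiplicativity of $\alpha$ is also right, though note that it silently strengthens the statement: neither Definition \ref{hom prejordan} nor the definition of a Hom-Jordan algebra assumes $\alpha$ multiplicative, so the equivalence as stated really requires that reading of ``Hom-algebra''.

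The genuine problem sits exactly in the step you postpone as ``index bookkeeping'': against the definitions as printed, the left members do \emph{not} line up term by term. The third representation axiom at $\rho=L$, applied to $u$, has left member
\begin{equation*}
[(x\circ y)\circ \alpha(z)]\cdot\alpha^{2}(u)+\alpha^{2}(x)\cdot\bigl[\alpha(z)\cdot(y\cdot u)\bigr]+\alpha^{2}(z)\cdot\bigl[\alpha(y)\cdot(x\cdot u)\bigr],
\end{equation*}
while the left member of \eqref{hom prejordan2} is
\begin{equation*}
[(x\circ z)\circ \alpha(y)]\cdot\alpha^{2}(u)+\alpha^{2}(x)\cdot\bigl[\alpha(y)\cdot(z\cdot u)\bigr]+\alpha^{2}(z)\cdot\bigl[\alpha(y)\cdot(x\cdot u)\bigr].
\end{equation*}
The only relabelings of $\{x,y,z\}$ that reconcile the first summands are $y\leftrightarrow z$ and $(x,y,z)\mapsto(z,x,y)$, and under either of them exactly one of the remaining summands fails to match (for $y\leftrightarrow z$ one is left comparing $\alpha^{2}(y)\cdot[\alpha(z)\cdot(x\cdot u)]$ with $\alpha^{2}(z)\cdot[\alpha(y)\cdot(x\cdot u)]$); commutativity of $\circ$ cannot repair this. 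The discrepancy is not cosmetic: for a noncommutative associative algebra with $\alpha=\mathrm{id}$ --- a Hom-pre-Jordan algebra, as the paper itself remarks --- the two sides of the printed third axiom for $\rho=L$ differ by $(zy-yz)\cdot x\cdot u\neq 0$, while \eqref{hom prejordan1}--\eqref{hom prejordan2} hold; so with the printed definitions the proposition is actually false. What your deferred computation would have uncovered is a typo: the last term of the third representation axiom should read $\rho(\alpha^{2}(y))\rho(\alpha(z))\rho(x)$ rather than $\rho(\alpha^{2}(z))\rho(\alpha(y))\rho(x)$; that is the form which reduces at $\alpha=\phi=\mathrm{id}$ to the classical Jordan-module axiom and to the pre-Jordan identity quoted in the paper's introduction. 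With this correction your matching closes up and the proof is complete; as submitted, the decisive step is both unverified and, taken literally, wrong.
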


\begin{proof}straightforward.
\end{proof}

\begin{prop} \label{Homjordan=>Homprejordan}
Let $(A, \circ,\a)$ be a Hom-Jordan algebra and $(V,\r,\f)$ be a representation.
If $T$ is an $\mathcal{O}$-operator associated to $\r$, then $(V,\ast,\f)$ is a Hom-pre-Jordan
algebra, where\begin{align}\label{u etoile v}
    u \ast v= \r(T(u))v, \quad \forall u,v \in V.
\end{align}
Therefore there exists an associated Hom-Jordan algebra structure on $V$ given by
Eq. \eqref{prejordan=>jordan} and $T$ is a homomorphism of Hom-Jordan algebras. Moreover, $T (V) =\{T (v) | v \in V\}\subset A$ is a Hom-Jordan subalgebra of $(A, \circ,\a)$ and there is an induced
Hom-pre-Jordan algebra structure on $T(V)$ given by
\begin{align}
T (u).T (v) = T (u \ast v), \quad \forall u,v \in V.
\end{align}
The corresponding associated Hom-Jordan algebra structure on $T(V)$ given by
Eq. \eqref{prejordan=>jordan} is just a Hom-Jordan subalgebra of $(A, \circ,\a)$ and $T$ is a homomorphism of Hom-pre-Jordan algebras.
\end{prop}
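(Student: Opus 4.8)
The plan is to avoid a brute-force verification of the two defining identities \eqref{hom prejordan1}--\eqref{hom prejordan2} for $\ast$ and instead to reduce everything to the preceding characterisation of Hom-pre-Jordan algebras. Write $u\circ v=u\ast v+v\ast u=\rho(T(u))v+\rho(T(v))u$ for the anti-commutator of $\ast$ on $V$. The $\mathcal{O}$-operator conditions \eqref{O-operator of Jordan algebra} say precisely that $T\phi=\alpha T$ and $T(u)\circ T(v)=T(\rho(T(u))v+\rho(T(v))u)=T(u\circ v)$; thus $T$ is already a morphism of the underlying Hom-algebras $(V,\circ,\phi)\to(A,\circ,\alpha)$. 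By the previous Proposition applied to $(V,\ast,\phi)$, it then suffices to establish two things: (a) $(V,\circ,\phi)$ is a Hom-Jordan algebra, and (b) $(V,L,\phi)$, where $L(u)=\rho(T(u))$ is the left multiplication of $\ast$, is a representation of $(V,\circ,\phi)$.

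For (a) I would use the semi-direct product rather than a direct check of the Jordan identity. By Proposition~\ref{semidirect product hom jordan} the representation $(V,\rho,\phi)$ produces a Hom-Jordan algebra $A\ltimes_{\rho,\phi}V$ with product \eqref{semidirect product} and twisting map $\alpha\oplus\phi$. I would then consider the ``graph'' $W=\{T(v)+v\mid v\in V\}\subset A\oplus V$. Using the $\mathcal{O}$-operator identity, with $w:=u\circ v$,
\begin{equation*}
(T(u)+u)\ast(T(v)+v)=T(u)\circ T(v)+\rho(T(u))v+\rho(T(v))u=T(w)+w ,
\end{equation*}
so $W$ is closed under the product, and $(\alpha\oplus\phi)(T(v)+v)=T(\phi(v))+\phi(v)\in W$ since $\alpha T=T\phi$; hence $W$ is a Hom-Jordan subalgebra. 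As $v\mapsto T(v)+v$ is a linear isomorphism $V\to W$ intertwining $\circ$ with the product of $W$ and $\phi$ with $\alpha\oplus\phi$, the algebra $(V,\circ,\phi)$ inherits the Hom-Jordan identity. This graph-of-$T$ device is the key step and is what replaces the delicate direct verification (in a direct proof, the Jordan identity on $V$ is equivalent to \eqref{hom prejordan2}, whose matching against the third representation identity is the genuinely awkward computation).

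For (b) I would transport the three identities defining a representation \eqref{representation} through $T$. Because $L(u)=\rho(T(u))$, $L(\phi(u))=\rho(\alpha T(u))$, $L(u\circ v)=\rho(T(u)\circ T(v))$ and $L(\phi(u)\circ\phi(v))=\rho(\alpha T(u)\circ\alpha T(v))$, each representation axiom for $L$ evaluated at $x,y,z\in V$ becomes the very same axiom for $\rho$ evaluated at $T(x),T(y),T(z)\in A$, which holds because $(V,\rho,\phi)$ is a representation of $(A,\circ,\alpha)$. With (a) and (b) in hand, the previous Proposition gives that $(V,\ast,\phi)$ is a Hom-pre-Jordan algebra whose associated Hom-Jordan algebra is $(V,\circ,\phi)$, and $T$ is a homomorphism of Hom-Jordan algebras by the first paragraph.

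Finally, for the assertions about $T(V)$: it is closed under $\circ$ since $T(u)\circ T(v)=T(u\circ v)$ and under $\alpha$ since $\alpha T=T\phi$, so it is a Hom-Jordan subalgebra of $(A,\circ,\alpha)$. I would then define $T(u)\cdot T(v):=T(u\ast v)$ and check well-definedness, which is the one point requiring care because $T$ need not be injective: the product depends on $T(u)$ only through $\rho(T(u))$, and if $T(v)=T(v')$ then putting $w=v-v'$ one has $T(w)=0$, whence $T(u\ast w)=-T(w\ast u)=-T(\rho(T(w))u)=0$ using $T(u\circ w)=T(u)\circ T(w)=0$. By construction $T\colon(V,\ast,\phi)\to(T(V),\cdot,\alpha)$ is then a surjective Hom-pre-Jordan morphism satisfying $T(u\ast v)=T(u)\cdot T(v)$ and $T\phi=\alpha T$, so $(T(V),\cdot,\alpha)$ is a Hom-pre-Jordan algebra, its associated Hom-Jordan structure is exactly $\circ$ restricted to $T(V)$, and $T$ is a homomorphism of Hom-pre-Jordan algebras. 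The main obstacles to watch are thus the Jordan-ness of $\circ$ on $V$ (resolved by the graph construction) and this well-definedness on the image.
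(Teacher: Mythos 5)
Your proposal is correct, but it takes a genuinely different route from the paper's. The paper proves the main claim by direct verification: it sets $x=T(u)$, $y=T(v)$, $z=T(w)$, expands each term of the two Hom-pre-Jordan identities \eqref{hom prejordan1}--\eqref{hom prejordan2} for $\ast$ (e.g. $(\phi(u)\bullet\phi(v))\ast(\phi(w)\ast\phi(a))=\rho(\alpha(x)\circ\alpha(y))\rho(\alpha(z))\phi(a)$ and $[(u\bullet v)\bullet\phi(w)]\ast\phi^2(a)=\rho((x\circ y)\circ\alpha(z))\phi^2(a)$), and matches the cyclic sums against the second and third representation axioms of $\rho$, dismissing the remaining assertions with ``the other conclusions follow immediately.'' You instead factor everything through the characterization ``Hom-pre-Jordan $\iff$ the anti-commutator is Hom-Jordan and left multiplication is a representation'': you obtain Hom-Jordan-ness of $\circ$ on $V$ by realizing $(V,\circ,\phi)$ as the graph $\{T(v)+v\mid v\in V\}$, a subalgebra of the semidirect product $A\ltimes_{\rho,\phi}V$ closed under $\alpha\oplus\phi$, and you obtain the representation property of $L(u)=\rho(T(u))$ by transporting each axiom through $T$ using $T\phi=\alpha T$ and $T(u\circ v)=T(u)\circ T(v)$; this second half is, in substance, the same computation the paper performs, just organized at the level of representation axioms rather than of the pre-Jordan identities. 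What your route buys is conceptual economy (it reuses two previously stated propositions instead of redoing the expansion), the standard and reusable graph-of-an-$\mathcal{O}$-operator device, and---notably---an honest treatment of the induced structure on $T(V)$: your check that $T(u)\cdot T(v):=T(u\ast v)$ is well defined when $T$ is not injective (via $T(w)=0\Rightarrow T(w\ast u)=T(\rho(T(w))u)=0$ and $T(u\ast w)=-T(w\ast u)$) fills a gap the paper glosses over. What the paper's route buys is self-containedness: it does not lean on the semidirect-product and characterization propositions, both of which the paper itself leaves as ``straightforward.'' One small inaccuracy in your commentary only: the Jordan identity for $\circ$ is not ``equivalent to \eqref{hom prejordan2}''; it follows from the two pre-Jordan identities jointly, as the proof of the first proposition of Section 2 shows. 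Since that remark is not load-bearing, your argument is unaffected.
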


\begin{proof}
Let $u,v,w,a\in V$ and put $x=T(u)$, $y=T(v)$, $z=T(w)$ and $u\bullet v=u\ast v+v\ast u$. Note first that $T(u\bullet v)=T(u)\circ T(v)$. Then
\begin{align*}
  (\f(u)\bullet \f(v))\ast(\f(w)\ast\f(a)) &=\rho(T(\rho(T(\f(u)\bullet\f(v)))))\rho(T(\f(w)))\f(a)  \\
   &=\rho(T(\f(u))\circ T(\f(v)))\rho(T(\f(w)))\f(a)\\
   &=\rho(\a(x)\circ \a(y))\rho(\a(z))\f(a),
\end{align*}
\begin{align*}
 \f^2(u)\ast[(v\bullet w)\ast \f(a)]   & = \rho(T(\f^2(u)))\rho(T(v\bullet w) ) \f(a) \\
   &= \rho(T(\f^2(u)))\rho(T(v)\circ T(w)) \f(a)\\
   &=\rho(\a^2(x))\rho(y\circ z) \f(a),
\end{align*}
\begin{align*}
  \f^2(u)\ast[\f(v)\ast(w\ast a)] & =\rho(T(\f^2(u)))\rho(T(\f(v)))\rho(T(w)) a \\
    & =\rho(\a^2(x))\rho(\a(y))\rho(z) a,
\end{align*}
\begin{align*}
  [(u\bullet v)\bullet \f(w)]\ast \f^2(a) & = \rho(T([(u\bullet v)\bullet \f(w)])) \f^2(a) \\
   & =\rho([T(u\bullet v)\circ T(\f(w))]) \f^2(a) \\
   & =\rho([(T(u)\circ T(v))\circ T(\f(w))]) \f^2(a)\\
   &=\rho([(x\circ y)\circ \a(z)]) \f^2(a).
\end{align*}
Hence,
\begin{align*}
   & (\f(u)\bullet \f(v))\ast(\f(w)\ast\f(a)) +(\f(v)\bullet \f(w))\ast(\f(u)\ast\f(a)) +(\f(w)\bullet \f(u))\ast(\f(v)\ast\f(a))  \\
   & =\rho(\a(x)\circ \a(y))\rho(\a(z))\f(a)+\rho(\a(y)\circ \a(z))\rho(\a(x))\f(a)+\rho(\a(z)\circ \a(x))\rho(\a(y))\f(a) \\
   & =\rho(\a^2(x))\rho(y\circ z) \f(a)+\rho(\a^2(y))\rho(z\circ x) \f(a)+\rho(\a^2(z))\rho(x\circ y) \f(a)\\
   &+\f^2(u)\ast[(v\bullet w)\ast \f(a)]+\f^2(v)\ast[(w\bullet u)\ast \f(a)]+\f^2(w)\ast[(u\bullet v)\ast \f(a)],
\end{align*}
and
\begin{align*}
   & [(u\bullet v)\bullet \f(w)]\ast \f^2(a)+\f^2(u)\ast[\f(w)\ast(v\ast a)]+\f^2(w)\ast[\f(v)\ast(u\ast a)] \\
   & =\rho([(x\circ y)\circ \a(z)]) \f^2(a)+\rho(\a^2(x))\rho(\a(z))\rho(y) a+\rho(\a^2(z))\rho(\a(y))\rho(x) a \\
   & =\rho(\a^2(x))\rho(y\circ z) \f(a)+\rho(\a^2(y))\rho(z\circ x) \f(a)+\rho(\a^2(z))\rho(x\circ y) \f(a)\\
   &+\f^2(u)\ast[(v\bullet w)\ast \f(a)]+\f^2(v)\ast[(w\bullet u)\ast \f(a)]+\f^2(w)\ast[(u\bullet v)\ast \f(a)].
\end{align*}
Therefore, $(V,\ast,\f)$ is a Hom-pre-Jordan algebra. The other conclusions follow immediately.
\end{proof}

An obvious consequence of Proposition \ref{Homjordan=>Homprejordan} is the following construction of a Hom-pre-Jordan algebra in terms of a Rota-Baxter operator $($of weight zero$)$ of a Hom-Jordan algebra.

\begin{cor}
Let $(A, \circ,\a)$ be a Hom-Jordan algebra and $R$ be a Rota-Baxter
operator $($of weight zero$)$ on $A$. Then there is a Hom-pre-Jordan algebra structure
on $A$ given by
$$x\c y= R(x) \circ y, \quad \forall x,y \in A.$$
\end{cor}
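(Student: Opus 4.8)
The plan is to obtain this corollary as a direct specialization of Proposition \ref{Homjordan=>Homprejordan}. The key observation, already recorded in the Example following the definition of an $\mathcal{O}$-operator, is that a Rota-Baxter operator $R$ of weight zero on $(A,\circ,\a)$ is nothing but an $\mathcal{O}$-operator associated to the adjoint representation $(A,ad,\a)$, where $ad(x)(y)=x\circ y$. I would therefore apply the proposition with $V=A$, $\r=ad$, $\f=\a$ and $T=R$.

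First I would confirm that $R$ meets the two defining conditions of an $\mathcal{O}$-operator for $(A,ad,\a)$. The condition $T\f=\a T$ becomes $R\a=\a R$, i.e. the compatibility of $R$ with the structure map. The quadratic condition $T(u)\circ T(v)=T\big(\r(T(u))v+\r(T(v))u\big)$ unfolds, via $\r(T(u))v=ad(R(u))v=R(u)\circ v$ and the commutativity of $\circ$, into
\begin{equation*}
R(u)\circ R(v)=R\big(R(u)\circ v+u\circ R(v)\big),
\end{equation*}
which is exactly the Rota-Baxter identity of weight zero; this is precisely the content of the cited Example.

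With $R$ identified as an $\mathcal{O}$-operator, Proposition \ref{Homjordan=>Homprejordan} instantly provides a Hom-pre-Jordan algebra structure $(A,\ast,\a)$ with $u\ast v=\r(T(u))v=ad(R(u))v=R(u)\circ v$. Renaming $u\ast v$ as $x\c y$, this is precisely the product $x\c y=R(x)\circ y$ in the statement. The entire computational burden --- verifying identities \eqref{hom prejordan1} and \eqref{hom prejordan2} --- has already been discharged in the proof of the proposition, so it is inherited here at no cost. Consequently there is no genuine obstacle: the only thing to check beyond the cited results is the elementary identification of the induced product with the claimed one, which is immediate.
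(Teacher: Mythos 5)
Your proposal is correct and matches the paper's intended argument exactly: the paper introduces this corollary as ``an obvious consequence of Proposition \ref{Homjordan=>Homprejordan}'' and leaves the proof as ``straightforward,'' the implicit route being precisely yours --- identify the Rota-Baxter operator as an $\mathcal{O}$-operator for the adjoint representation $(A,ad,\a)$ and specialize the proposition. Nothing is missing; your write-up simply makes explicit the identification $u\ast v=ad(R(u))v=R(u)\circ v$ that the paper takes for granted.
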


\begin{proof}
Straightforward.
\end{proof}

\begin{exa}
Let $\{e_1,e_2,e_3\}$ be a basis of   a $3$-dimensional vector space $A$ over $\mathbb{K}$. The following product $\circ $ and the linear map $\a$ define Hom-Jordan algebras over $\mathbb{K}$. 
$$
\begin{array}{c|c|c|c}
  \circ  & e_1 & e_2 & e_3 \\
  \hline
  e_1& ae_1 & ae_2 & be_3 \\
  \hline
  e_2 & ae_2 & ae_2 & \frac{b}{2}e_3 \\
  \hline
  e_3 & be_3 & \frac{b}{2}e_3 & 0 
\end{array},$$
\begin{align*}
    \a(e_1)=ae_1,\ \a(e_2)=ae_2,\ \a(e_3)=be_3,
\end{align*}
where $a$ and $b$ are parameters in $\mathbb{K}$.  Let $R$ be the operator defined with respect to the basis $\{e_1,e_2,e_3\}$ by 
\begin{align*}
    R(e_1)=\lambda_1 e_3,\ R(e_2)=\lambda_2 e_3,\  R(e_3)=0,
\end{align*}
where $\lambda_1$ and $\lambda_2$ are parameters in $\mathbb{K}$.  Then we can easily check that $R$ is a Rota-Baxter operator on $A$.  
Now, using  the previous corollary,   there is a Hom-pre-Jordan algebra structure
on $A$, with the same twist map and the multiplication given by
$x\c y= R(x) \circ y, \quad \forall x,y \in A$, that is 

$$
\begin{array}{c|c|c|c}
  \cdot  & e_1 & e_2 & e_3 \\
  \hline
  e_1& \lambda_1 b e_3 & \lambda_1 \frac{b}{2}e_3 & 0 \\
  \hline
  e_2 & \lambda_2 b e_3 & \lambda_2 \frac{b}{2}e_3 & 0\\
  \hline
  e_3 & 0 & 0  & 0 
\end{array}.$$
 
\end{exa}

\begin{cor}
Let $(A,\circ,\a)$ be a Hom-Jordan algebra. Then there exists a compatible
Hom-pre-Jordan algebra structure on $A$ if and only if there exists an invertible
$\mathcal{O}$-operator of $(A,\circ,\a)$.
\end{cor}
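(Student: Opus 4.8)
The plan is to prove the two implications separately, using the identity map in the forward direction and transporting the structure along an invertible $\mathcal{O}$-operator in the backward direction; throughout I rely on the earlier characterization of compatible structures (the proposition asserting that $(A,\cdot,\alpha)$ is Hom-pre-Jordan iff $(A,\circ,\alpha)$ is Hom-Jordan and $(A,L,\alpha)$ is a representation) together with Proposition~\ref{Homjordan=>Homprejordan}.

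For the implication ``$\Rightarrow$'', suppose $(A,\cdot,\alpha)$ is a compatible Hom-pre-Jordan algebra structure, so that $x\circ y=x\cdot y+y\cdot x$. By the characterization just mentioned, $(A,L,\alpha)$, where $L$ is the left multiplication operator of $(A,\cdot,\alpha)$, is a representation of the Hom-Jordan algebra $(A,\circ,\alpha)$. I then claim that the identity map $\mathrm{id}_A:A\to A$ is an $\mathcal{O}$-operator of $(A,\circ,\alpha)$ associated to this representation. The first defining condition $\mathrm{id}_A\,\alpha=\alpha\,\mathrm{id}_A$ is immediate, and the second reads $x\circ y=L(x)y+L(y)x=x\cdot y+y\cdot x$, which is exactly the compatibility hypothesis. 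Since $\mathrm{id}_A$ is invertible, this produces the desired invertible $\mathcal{O}$-operator.

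For the implication ``$\Leftarrow$'', let $T:V\to A$ be an invertible $\mathcal{O}$-operator associated to a representation $(V,\rho,\varphi)$. By Proposition~\ref{Homjordan=>Homprejordan}, the product $u\ast v=\rho(T(u))v$ makes $(V,\ast,\varphi)$ a Hom-pre-Jordan algebra. Since $T$ is invertible, I transport this product to $A$ by setting
$$x\cdot y=T\big(T^{-1}(x)\ast T^{-1}(y)\big),\qquad x,y\in A.$$
Because the $\mathcal{O}$-operator relation $T\varphi=\alpha T$ says precisely that $T$ intertwines $\varphi$ and $\alpha$, the invertible map $T$ is an isomorphism of Hom-algebras from $(V,\ast,\varphi)$ onto $(A,\cdot,\alpha)$, so $(A,\cdot,\alpha)$ inherits the Hom-pre-Jordan axioms with the \emph{original} twist $\alpha$. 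It remains to check compatibility: writing $u=T^{-1}(x)$ and $v=T^{-1}(y)$,
$$x\cdot y+y\cdot x=T(u\ast v)+T(v\ast u)=T\big(\rho(T(u))v+\rho(T(v))u\big)=T(u)\circ T(v)=x\circ y,$$
the third equality being the defining identity of the $\mathcal{O}$-operator. Hence $(A,\cdot,\alpha)$ is a compatible Hom-pre-Jordan algebra structure on $(A,\circ,\alpha)$.

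The only point requiring genuine care is the backward direction's claim that transporting $\ast$ along $T$ yields a Hom-pre-Jordan algebra with twist $\alpha$ rather than some unrelated map; this is guaranteed exactly by $T\varphi=\alpha T$, which reduces the verification of all the Hom-pre-Jordan identities to the identities already established for $(V,\ast,\varphi)$ in Proposition~\ref{Homjordan=>Homprejordan}, pushed forward through the isomorphism $T$. Everything else is a direct substitution, so no heavy computation is expected.
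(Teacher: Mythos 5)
Your proof is correct, and both directions follow the same overall strategy as the paper: the identity map supplies the invertible $\mathcal{O}$-operator in the forward direction, and in the backward direction the product $x\cdot y=T(\rho(x)T^{-1}(y))$ obtained by pushing the structure of Proposition~\ref{Homjordan=>Homprejordan} forward through the invertible $T$ gives the compatible structure, with the same one-line compatibility check. There is, however, one substantive difference, and it is in your favor: in the forward direction the paper asserts that $\mathrm{id}_A$ is an $\mathcal{O}$-operator of $(A,\circ,\a)$ associated to the \emph{adjoint} representation $(A,ad,\a)$, which cannot be right as stated, since for $\rho=ad$ the defining identity would read $x\circ y=ad(x)y+ad(y)x=2(x\circ y)$, forcing the product to vanish. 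You instead take the representation $(A,L,\a)$ given by left multiplication of the compatible Hom-pre-Jordan product, for which the identity $x\circ y=L(x)y+L(y)x=x\cdot y+y\cdot x$ is exactly the compatibility hypothesis; this is the correct choice (and the one used in the classical pre-Jordan setting). Your additional remark that $T\varphi=\a T$ is what guarantees the transported structure carries the original twist $\a$ is also a point the paper leaves implicit.
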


\begin{proof}Let $(A,\c,\a)$ be a Hom-pre-Jordan algebra and $(A,\circ,\a)$ be the associated
Hom-Jordan algebra. Then the identity map $id : A \to A$ is an invertible $\mathcal{O}$-operator of $(A,\circ,\a)$
associated to $(A,ad,\a)$.

 Conversely, suppose that there exists an invertible $\mathcal{O}$-operator $T$ of $(A,\circ,\a)$ associated to a
representation $(V,\rho,\f)$, then by Proposition \ref{Homjordan=>Homprejordan}, there is a Hom-pre-Jordan algebra
structure on $T(V)=A$ given by
$$T(u)\c T(v)=T(\rho(T(u))v),\ \textrm{for\ all}\ u,v\in V.$$
If we set $T(u)=x$ and $T(v)=y$, then we obtain
$$x\c y=T(\rho(x)T^{-1}(y)),\ \textrm{for\ all}\ x,y\in A.$$
It is a compatible Hom-pre-Jordan algebra structure on $(A,\circ,\a)$. Indeed,
\begin{align*}
  x\c y+y \c x & =T(\rho(x)T^{-1}(y)+\rho(y)T^{-1}(x)) \\
   &= T(T^{-1}(x))\circ T(T^{-1}(y))=x\circ y.
\end{align*}
\end{proof}
The following result reveals the relationship between Hom-pre-Jordan algebras, Hom-pre-alternative algebras  and so Hom-dendriform algebras. we recall the following definitions introduced in \cite{homprealt, makhloufDendriform}
\begin{df}
  A Hom-pre-alternative algebra is a quadruple $(A,\prec,\succ,\a)$ where $\prec,\succ:A\otimes A\to A$  and   $\a:A\to A$ are
 linear maps satisfying
  \begin{align}
 &(x\succ y)\prec \a(z)- a(x)\succ( y\prec z)+(y\prec x)\prec \a(z)- a(y)\prec( x\star z)=0 ,\label{HomPreAlt1} \\
& (x\succ y)\prec \a(z)- a(x)\succ( y\prec z)+(x\star z)\succ \a(y)- a(x)\succ( z\succ y)=0 ,\label{HomPreAlt2}\\
 & (x\prec y)\prec \a(z)- a(x)\prec( y\star z)+(x\prec z)\prec \a(y)- a(x)\prec( z\star y)=0,\label{HomPreAlt3}\\
&(x\star y)\succ \a(z)- a(x)\succ( y\succ z)+(y\star x)\succ \a(z)- a(y)\succ( x\succ z)=0,\label{HomPreAlt4}
 \end{align}
 for all $x,y,z\in A$, where $x\star y=x \prec y+ x \succ y$.
\end{df}

\begin{df}
  A Hom-Dendriform algebra is a quadruple $(A,\prec,\succ,\a)$ where $\prec,\succ:A\otimes A\to A$  and   $\a:A\to A$ are
 linear maps satisfying
 \begin{align}
&(x\succ y)\prec \a(z)- a(x)\succ( y\prec z)=0 ,\label{HomDedriform1} \\
& (x\prec y)\prec \a(z)- a(x)\prec( y\star z)=0,\label{HomDendriform2}\\
&(x\star y)\succ \a(z)- a(x)\succ( y\succ z)=0,\label{HomDendriform3}
\end{align}
 for all $x,y,z\in A$, where $x\star y=x \prec y+ x \succ y$.
\end{df}

\begin{prop}
Let $(A,\prec,\succ,\a)$ be a Hom-pre-alternative algebra. Then the product given by
\begin{align*}
    x\c y=x \succ y+ y \prec x, \quad \forall x, y \in A,
\end{align*}
defines a Hom-pre-Jordan algebra structure on $A$.
\end{prop}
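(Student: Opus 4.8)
The plan is to verify directly the two defining identities \eqref{hom prejordan1} and \eqref{hom prejordan2} for the product $x\c y=x\succ y+y\prec x$. The organizing observation, which I would record first, is the shape of the associated symmetric product: writing $\star=\prec+\succ$, one computes
\[
x\c y+y\c x=(x\succ y+x\prec y)+(y\succ x+y\prec x)=x\star y+y\star x,
\]
so the product $\circ$ of \eqref{prejordan=>jordan} attached to $(A,\c,\a)$ is precisely the anti-commutator of the Hom-algebra $(A,\star,\a)$. This both pins down what $\circ$ is and suggests splitting the verification into a fully symmetric part and a genuinely ``directed'' part.

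For the symmetric part I would use a labor-saving observation rather than brute force: summing the Hom-pre-alternative axioms \eqref{HomPreAlt1}--\eqref{HomPreAlt4} shows (as in \cite{homprealt}) that $(A,\star,\a)$ is a Hom-alternative algebra, and the plus-algebra of a Hom-alternative algebra is a Hom-Jordan algebra \cite{yau1}; hence the product $\circ=\star+\star^{\mathrm{op}}$ already satisfies the Hom-Jordan identity \eqref{somme cyc2}. This confirms that the $\circ$-symmetrized consequences of \eqref{hom prejordan1} come out correctly and removes them from the burden of the main calculation. For the remaining content I would pass to the equivalent associator forms \eqref{EqvHomPreJordan1}--\eqref{EqvHomPreJordan2}, substitute $a\c b=a\succ b+b\prec a$ in every slot, and expand each nested product into its $\succ/\prec$ monomials with the appropriate $\a$'s in place.

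The heart of the proof — and the main obstacle — is the bookkeeping in this last step. Expanding the triply nested products generates a large number of monomials, and the real work is to reorganize them into instances of \eqref{HomPreAlt1}--\eqref{HomPreAlt4} evaluated at $\a$-twisted arguments, each of the four pre-alternative relations annihilating one group. The cyclic symmetry in $x,y,z$ of \eqref{EqvHomPreJordan1} and the left/right symmetry of \eqref{EqvHomPreJordan2} are exactly what let the monomials pair up as those relations require, and one must track the placement of $\a$ carefully throughout, since no multiplicativity of $\a$ is invoked. Once the symmetric portion is split off as above, this matching becomes mechanical, and \eqref{hom prejordan1} and \eqref{hom prejordan2} follow, so that $(A,\c,\a)$ is a Hom-pre-Jordan algebra.
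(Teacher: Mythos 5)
Your skeleton is the same as the paper's: expand $x\c y=x\gr y+y\pt x$ inside the defining identities and cancel the resulting $\pt/\gr$-monomials against \eqref{HomPreAlt1}--\eqref{HomPreAlt4}, invoking the fact from \cite{homprealt} that $\star=\pt+\gr$ makes $(A,\star,\a)$ Hom-alternative. But the two organizational claims your plan rests on are unsound. First, the ``symmetric part'' reduction removes nothing: by the paper's own remark, \eqref{EqvHomPreJordan1}--\eqref{EqvHomPreJordan2} are \emph{equivalent} to \eqref{hom prejordan1}--\eqref{hom prejordan2}, so what you call the ``remaining content'' after the split is the entire content. Nor can the idea be repaired: as the proof that the anti-commutator of a Hom-pre-Jordan algebra is Hom-Jordan shows (the decomposition $l_1+l_2+l_3+l_4=r_1+r_2+r_3+r_4$), the Hom-Jordan identity for $\circ$ is one fixed combination of instances of \eqref{hom prejordan1} and \eqref{hom prejordan2}; knowing from \cite{homprealt} and \cite{yau1} that this combination vanishes can only ever give a symmetrized consequence of the two identities, never isolate either of them. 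So the appeal to Yau's theorem is decoration, and the full burden falls on your last step.

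That last step --- which you yourself call the heart of the proof --- is only asserted, and as described it would fail, precisely because of your insistence that ``no multiplicativity of $\a$ is invoked.'' Each of \eqref{HomPreAlt1}--\eqref{HomPreAlt4} relates terms in which one outer factor is $\a$ of a single element and the other outer factor is an untwisted product (e.g.\ $(x\gr y)\pt \a(z)$ versus $\a(x)\gr(y\pt z)$). The monomials coming from the left side of \eqref{hom prejordan1}, such as $[\a(x)\circ\a(y)]\gr[\a(z)\gr \a(u)]$, carry products of twisted elements in \emph{both} slots, so none of the four axioms applies to them unless you identify $\a(z)\gr\a(u)$ with $\a(z\gr u)$ and $\a(x)\circ\a(y)$ with $\a(x\circ y)$ --- that is, unless $\a$ is an algebra morphism. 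The paper makes this identification silently (its computation passes from $[\a(x)\circ\a(y)]\gr[\a(z)\gr\a(u)]$ to terms like $[(x\circ y)\circ\a(z)]\gr\a^2(u)$, legitimate only in the multiplicative case), so the proposition should really be read with multiplicativity as a standing hypothesis; your plan explicitly forbids the one identification that lets the cancellation scheme start, and the cancellation itself is never exhibited. That is a genuine gap.
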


\begin{proof}
Let $x,y,z, u\in A$, set $x\star y=x \prec y+ x \succ y$ and $x\circ y=x \c y+ y \c x=x \star y+ y \star x $. We will just prove the identity \eqref{hom prejordan1}.  One has
\begin{align*}
& \circlearrowleft_{x,y,z}\Big([\a(x) \circ \a(y)]\c[\a(z)\c \a(u)]-\a^2(x)\c[(y \circ z) \c \a(u)]\Big)\\
&=\circlearrowleft_{x,y,z}\Big([\a(x) \circ \a(y)]\gr[\a(z)\gr \a(u)]+[\a(x) \circ \a(y)]\gr[\a(u)\pt \a(z)]\\&+[\a(z) \gr \a(y)]\pt[\a(x)\circ \a(y)]+[\a(u) \pt \a(z)]\pt[\a(x)\circ \a(y)]\\
&-\a^2(x)\gr[(y \circ z) \gr \a(u)]-\a^2(x)\gr[\a(u)\pt(y \circ z) ]\\&-[(y \circ z) \gr \a(u)]\pt\a^2(x)-[\a(u)\pt(y \circ z)]\pt\a^2(x)\Big)
\\
&=\circlearrowleft_{x,y,z}\Big([(x \circ y)\circ\a(z)]\gr \a^2(u)+[\a(u) \pt \a(z)]\pt[\a(x)\circ \a(y)]-[\a(u)\pt(y \circ z)]\pt\a^2(x)\Big).
\end{align*}
Since $(A,\star,\a)$ is a Hom-alternative algebra (see \cite{homprealt}), then we have $$\circlearrowleft_{x,y,z}\Big([(x \circ y)\circ\a(z)]\gr \a^2(u)\Big)=0.$$
In addition  using the fact that $(A,\pt,\gr,\a)$ is a Hom-pre-alternative algebra,  then we obtain
$$\circlearrowleft_{x,y,z}\Big([(x \circ y)\circ\a(z)]\gr \a^2(u)+[\a(u) \pt \a(z)]\pt[\a(x)\circ \a(y)]-[\a(u)\pt(y \circ z)]\pt\a^2(x)\Big)=0.$$
The identity \eqref{hom prejordan2} can be done by a similar treatment.
\end{proof}
 Since any  Hom-dendriform algebra is a Hom-pre-alternative algebra, we obtain the following conclusion.
\begin{cor}
Let $(A,\prec,\succ,\a)$ be a Hom-dendriform algebra. Then the product given by
\begin{align*}
    x\c y=x \succ y+ y \prec x, \quad \forall x, y \in A,
\end{align*}
defines a Hom-pre-Jordan algebra structure on $A$.
\end{cor}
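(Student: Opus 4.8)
The plan is to deduce this corollary from the immediately preceding proposition, which already shows that the product $x\c y = x\succ y + y\prec x$ endows any Hom-pre-alternative algebra with a Hom-pre-Jordan algebra structure. Hence it suffices to verify that every Hom-dendriform algebra is, in particular, a Hom-pre-alternative algebra; the conclusion then follows at once by applying that proposition to $(A,\prec,\succ,\a)$.

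To establish this inclusion, I would show that each of the four defining identities of a Hom-pre-alternative algebra, Eqs.~\eqref{HomPreAlt1}--\eqref{HomPreAlt4}, arises as the sum of two instances of the three Hom-dendriform identities \eqref{HomDedriform1}--\eqref{HomDendriform3}, every one of which vanishes identically. Concretely, \eqref{HomPreAlt1} is the sum of \eqref{HomDedriform1} and \eqref{HomDendriform2} with the roles of $x$ and $y$ exchanged; \eqref{HomPreAlt2} is the sum of \eqref{HomDedriform1} and \eqref{HomDendriform3} with $y$ and $z$ exchanged; \eqref{HomPreAlt3} is the sum of \eqref{HomDendriform2} and the same identity with $y$ and $z$ exchanged; and \eqref{HomPreAlt4} is the sum of \eqref{HomDendriform3} and the same identity with $x$ and $y$ exchanged. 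Since each summand is zero, all four pre-alternative axioms hold, so $(A,\prec,\succ,\a)$ is a Hom-pre-alternative algebra.

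There is essentially no obstacle here: the argument is pure bookkeeping, matching terms in the axioms after suitable transpositions of the variables. The only point requiring a little care is to track the $\star$-terms (recall $x\star y = x\prec y + x\succ y$) and to apply the correct permutation of variables so that the two chosen dendriform identities reproduce exactly the four terms of each pre-alternative identity. Having verified that $(A,\prec,\succ,\a)$ is Hom-pre-alternative, the preceding proposition immediately yields the desired Hom-pre-Jordan algebra structure on $A$.
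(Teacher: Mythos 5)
Your proposal is correct and follows exactly the paper's route: the paper likewise derives this corollary by noting that every Hom-dendriform algebra is a Hom-pre-alternative algebra and then invoking the preceding proposition. Your term-by-term decompositions (each pre-alternative identity \eqref{HomPreAlt1}--\eqref{HomPreAlt4} as a sum of two instances of the dendriform identities \eqref{HomDedriform1}--\eqref{HomDendriform3} under the stated transpositions) all check out, and in fact supply the verification that the paper asserts without detail.
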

%--------------------------------------------------------------------
\subsection{Bimodules and $\mathcal{O}$-operators}
%-----------------------------------------------------------------------
 In this section, we introduce and study bimodules of Hom-pre-Jordan algebras.
 \begin{df}
 Let $(A,\c,\a)$ be a Hom-pre-Jordan algebra and $V$ be a vector space. Let  $l,r: A \to gl(V)$ be two linear maps and $\f \in gl(V)$. Then $(V,l,r,\f)$ is called a bimodule of $A$ if the following conditions hold $($for any $x,y,z \in A)$
\begin{align}%\label{bimodule prejordan alg}
& \f l(x)=l(\a(x)) \f,\ \f r(x)=r(\a(x)) \f  , \label{rephomprejor1} \\
&l(\a^2(x))l(y\circ z)  \f + l(\a^2(y))l(z\circ x)  \f  + l(\a^2(z))l(x\circ y)  \f  \nonumber \\
&= l(\a(x)\circ \a(y))l(\a(z)) \f  +  l(\a(y)\circ \a(z))l(\a(x)) \f +  l(\a(z)\circ \a(x))l(\a(y)) \f, \label{rephomprejor2}\\
& l((x\circ z)\circ \a(y)) \f^2+ l(\a^2(x))l(\a(z))l(y)+l(\a^2(z))l(\a(y))l(x) \nonumber \\
&= l(\a(x)\circ \a(y))l(\a(z)) \f +  l(\a(y)\circ \a(z))l(\a(x)) \f +  l(\a(z)\circ \a(x))l(\a(y)) \f ,\label{rephomprejor3}\\
& \f\big(l(x \circ y)r(z) + r(x · z)l(y) + r(y · z)r(x) + r(x · z)r(y) + r(y · z)l(x)\big) \nonumber\\
& = l(\a^2(x))r(\a(z))l(y) + l(\a^2(y))r(\a(z))r(x) + r[(x \circ y)\a(z)]\f^2\nonumber\\
&+l(\a^2(y))r(\a(z))l(x)+l(\a^2(x))r(\a(z))r(y), \label{rephomprejor4}\\
&\f\big(r(z · y)l(x) + r(x · y)r(z) + l(x \circ z)r(y) + r(x · y)l(z) + r(z · y)r(x)\big)\nonumber\\
&=\big( l(\a^2(x))r(z · y) + r(\a^2(y))r(x \circ z)
 + r(\a^2(y))l(x \circ z) + l(\a^2(z))r(x · y)\big)\f ,\label{rephomprejor5}\\
& \f\big(l(x · y)r(z) + r(x · z)l(y) + r(y · z)r(x) 
  + l(y · x)r(z) + r(x · z)r(y) + r(y · z)l(x) \big) \nonumber\\
&= l(\a^2(x))l(\a(y))r(z) + r(\a^2(z))l(\a(y))r(x) + r(\a^2(z))r(\a(y))r(x) \nonumber \\
& +r(\a^2(z))l(\a(y))l(x) + r[\a(y) · (x · z)]\f^2 + r(\a^2(z))r(\a(y))l(x), \label{rephomprejor6}
\end{align}
where $x \circ y= x \c y+ y \c x$.
\end{df}

\begin{prop}\label{semidirectproduct hompreJordan}
Let $(A,\c,\a)$ be a Hom-pre-Jordan algebra, $V$ be a vector space, $l,r:A\to gl(V)$ be linear maps and $\f \in gl(V)$. Then $(V,l,r,\f)$ is a   bimodule of $A$ if and only if
the direct sum $A \oplus V$ (as vector space) turns into a Hom-pre-Jordan algebra (the
semidirect sum) by defining the multiplication in $A\oplus V$ as
$$(x+u)\ast (y+v)=x \c y+ l(x)v+ r(y)u,\ \quad \forall x,y \in A,\ u,v \in V.$$
We denote it by $A\ltimes_{l,r}^{\a,\f} V$ or simply $A\ltimes V$.
\end{prop}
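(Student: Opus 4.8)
The plan is to turn the two defining identities \eqref{hom prejordan1}--\eqref{hom prejordan2} into the source of the six bimodule axioms by transporting them to $A\oplus V$ equipped with the product $\ast$ of the statement and the twisting map $\gamma:=\a\oplus\f$, i.e. $\gamma(x+u)=\a(x)+\f(u)$. Since the whole statement is an equivalence, I would prove both directions simultaneously, by checking that \eqref{hom prejordan1} and \eqref{hom prejordan2} \emph{for} $(A\oplus V,\ast,\gamma)$ expand into a prescribed list of the conditions \eqref{rephomprejor1}--\eqref{rephomprejor6}; each direction is then just reading the resulting equivalence of identities one way or the other.

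The first and decisive observation is structural: because $l$ and $r$ are linear, $l(0)=r(0)=0$, so $u\ast v=0$ for all $u,v\in V$, while $A\ast V\subseteq V$ and $V\ast A\subseteq V$. Thus $V$ is an abelian ideal of $(A\oplus V,\ast)$. A short induction on the bracketing of a four-fold $\ast$-product then shows that a subproduct involving two or more of the vector-space arguments already vanishes (a subtree carrying at least two $V$-entries evaluates to $0$, one carrying exactly one lands in $V$, one carrying none lands in $A$). Hence, evaluating \eqref{hom prejordan1} or \eqref{hom prejordan2} on arguments $x_i+u_i$ and splitting the resulting identity into its $A$- and $V$-components, the $A$-component keeps only the all-$A$ monomials and the $V$-component keeps only the monomials that are linear in exactly one $u_i$. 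This is what makes the verification finite.

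I would then treat the two components separately. The $A$-component of each identity is literally \eqref{hom prejordan1}, resp. \eqref{hom prejordan2}, for $(A,\c,\a)$, which holds by hypothesis and involves none of $l,r,\f$. For the $V$-component I would collect, in each identity, the coefficient of the single surviving $u_i$ as the module argument is placed in turn in each of the four slots; every such coefficient is an identity in $gl(V)$. Placing the module argument in the innermost slot (the one carrying $u$) produces the two purely left-acting relations \eqref{rephomprejor2} and \eqref{rephomprejor3}, which are exactly the second and third representation axioms of the associated Hom-Jordan algebra with $\r$ replaced by $l$; placing it in the outer slots produces the mixed $l,r$ relations \eqref{rephomprejor4}--\eqref{rephomprejor6}. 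The symmetries of the two identities organize the count: \eqref{hom prejordan1} is invariant under $\circlearrowleft_{x,y,z}$, so its three outer slots yield a single independent relation, while \eqref{hom prejordan2} is invariant only under the transposition $x\leftrightarrow z$, so its outer slots yield two. In each monomial the power of $\a$ decorating the module variable (none, $\a$, or $\a^2$) dictates the power of $\f$ (none, $\f$, or $\f^2$) that decorates the corresponding term, which is how the distribution of $\f$'s in \eqref{rephomprejor2}--\eqref{rephomprejor6} is matched. Finally, the twisting relations \eqref{rephomprejor1}, namely $\f l(x)=l(\a(x))\f$ and $\f r(x)=r(\a(x))\f$, are used to shuttle each $\f$ into its displayed position past the operators $l$ and $r$ (this is also exactly the compatibility of $\gamma$ with $\ast$).

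The main obstacle I expect is entirely organizational rather than conceptual: faithfully expanding each four-fold product, tracking in every monomial where the lone surviving $V$-factor sits and with which power of $\a$ (hence $\f$) it is stamped, and then using \eqref{rephomprejor1} to bring the raw coefficients into the exact form of \eqref{rephomprejor2}--\eqref{rephomprejor6}. Once this dictionary --- each bimodule axiom identified with one graded piece of one of the two Hom-pre-Jordan identities on $A\oplus V$ --- is established, both implications of the equivalence follow at once.
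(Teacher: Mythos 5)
The paper states this proposition with no proof at all, so there is nothing to compare your proposal against; it must stand on its own. Its architecture is the right (and standard) one for the quadrilinear axioms: $V$ is a square-zero ideal of $(A\oplus V,\ast)$, the identities \eqref{hom prejordan1}--\eqref{hom prejordan2} are multilinear and hence need only be tested with each slot in $A$ or in $V$, components with two or more $V$-entries vanish, the all-$A$ component is the hypothesis on $(A,\c,\a)$, and the components with exactly one $V$-entry give operator identities; your symmetry count (one outer relation from \eqref{hom prejordan1}, two from \eqref{hom prejordan2}, plus the two inner-slot relations) correctly yields five relations matching \eqref{rephomprejor2}--\eqref{rephomprejor6}.

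The genuine gap is your treatment of \eqref{rephomprejor1}, and it breaks the only-if direction. The identities \eqref{hom prejordan1}--\eqref{hom prejordan2} on $A\oplus V$ are quadrilinear, so no graded component of them can ever be the bilinear relation $\f l(x)=l(\a(x))\f$ or $\f r(x)=r(\a(x))\f$: these relations are not in your dictionary, and they are not implied by it. Concretely, take $A=\mathbb{K}e$ with the zero product and $\a=\mathrm{id}$, $V=\mathbb{K}^{2}$, $l(e)=0$, $r(e)=R$ with $R^{2}=0$, and $\f=F$ with $FR\neq RF$. Since $l=0$, every product satisfies $P\ast Q=r(\pi_{A}(Q))\pi_{V}(P)$, so any term whose right factor is itself a $\ast$-product vanishes, and the single surviving term of \eqref{hom prejordan2} is a composite of three copies of $R$, hence zero; thus $(A\oplus V,\ast,\a\oplus\f)$ satisfies both identities for \emph{every} $F$, while \eqref{rephomprejor1} fails. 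As you yourself observe parenthetically, \eqref{rephomprejor1} is exactly multiplicativity of $\gamma=\a\oplus\f$ on the mixed products --- but multiplicativity of the twist is \emph{not} part of the paper's definition of a Hom-pre-Jordan algebra, so it is not available to be "read off" in that direction. The same issue infects the shuttling step of your if direction: using \eqref{rephomprejor1} turns $\f l(x\circ y)r(z)$ into $l(\a(x\circ y))r(\a(z))\f$, whereas the raw expansion of the semidirect identities produces $l(\a(x)\circ\a(y))r(\a(z))\f$, and identifying the two requires $\a(x\circ y)=\a(x)\circ\a(y)$, i.e.\ multiplicativity of $\a$, which is also not assumed. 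Both defects (and indeed the literal statement of the proposition, whose only-if half the example above falsifies) are repaired by working in the multiplicative setting: assume $\a$ is an algebra morphism of $(A,\c)$ and require $\a\oplus\f$ to be an algebra morphism for $\ast$ --- on mixed terms this requirement \emph{is} \eqref{rephomprejor1} --- after which your slot-by-slot dictionary does close both implications. You need to add this hypothesis explicitly; without it the argument cannot be completed.
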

\begin{prop}
  Let $(V,l,r,\f)$ be a   bimodule of a Hom-pre-Jordan algebra $(A,\c,\a)$  and $(A,\circ,\a)$ be its associated Hom-Jordan algebra. Then
  \begin{itemize}
    \item [(a)] $(V,l,\f)$ is a representation of $(A,\circ,\a)$,
    \item [(b)] $(V,l+r,\f)$ is a representation of $(A,\circ,\a)$.
  \end{itemize}
\end{prop}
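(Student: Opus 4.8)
The plan is to avoid matching the six bimodule identities against the three representation identities one by one, and instead to deduce both statements from the semidirect-product descriptions already established. First I would invoke Proposition \ref{semidirectproduct hompreJordan}: since $(V,l,r,\f)$ is a bimodule, the semidirect sum $A\ltimes_{l,r}^{\a,\f}V$ is a Hom-pre-Jordan algebra, with multiplication $(x+u)\ast(y+v)=x\c y+l(x)v+r(y)u$ and twisting map $\a\oplus\f$. Passing to its associated Hom-Jordan algebra via Eq. \eqref{prejordan=>jordan}, a one-line computation gives the anti-commutator
\[
(x+u)\bullet(y+v)=(x+u)\ast(y+v)+(y+v)\ast(x+u)=x\circ y+(l+r)(x)v+(l+r)(y)u .
\]

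For part (b), I would simply observe that this formula for $\bullet$ is exactly the semidirect-product multiplication of Proposition \ref{semidirect product hom jordan} for the Hom-Jordan algebra $(A,\circ,\a)$ with the linear map $l+r$ playing the role of $\r$. Since $(A\oplus V,\bullet,\a\oplus\f)$ is a Hom-Jordan algebra, being the associated Hom-Jordan algebra of the Hom-pre-Jordan algebra $A\ltimes_{l,r}^{\a,\f}V$, the equivalence in Proposition \ref{semidirect product hom jordan} then yields at once that $(V,l+r,\f)$ is a representation of $(A,\circ,\a)$, which is (b).

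For part (a), I would use the proposition above stating that a Hom-pre-Jordan algebra is characterised by its associated Hom-Jordan algebra together with the fact that its left-multiplication operator gives a representation of that Hom-Jordan algebra. Applied to $A\ltimes_{l,r}^{\a,\f}V$, this says the left multiplication $\mathcal L$, defined by $\mathcal L(w)(w')=w\ast w'$, is a representation of $(A\oplus V,\bullet,\a\oplus\f)$. Now $A$ is a Hom-Jordan subalgebra of $(A\oplus V,\bullet,\a\oplus\f)$ (indeed $x\bullet y=x\circ y$ and $(\a\oplus\f)(A)\subseteq A$), and $V$ is a subspace preserved both by $\a\oplus\f$ and by every $\mathcal L(x)$ with $x\in A$, since $\mathcal L(x)(v)=x\ast v=l(x)v\in V$. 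Restricting the acting elements to $A$ and the module to the invariant subspace $V$ therefore produces a representation of $(A,\circ,\a)$ on $V$; because $\mathcal L(x)|_V=l(x)$ and $(\a\oplus\f)|_V=\f$, this restricted representation is precisely $(V,l,\f)$, which is (a).

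The only point requiring care is this restriction step in (a): one must check that the three defining identities \eqref{representation} of a representation, valid for $(A\oplus V,\mathcal L,\a\oplus\f)$ over $(A\oplus V,\bullet)$ for all arguments, continue to hold when the arguments are confined to the subalgebra $A$ and applied to vectors of the invariant subspace $V$. This is immediate once one notes that every product $\bullet$, every image under $\a\oplus\f$, and every operator $\mathcal L(a)$ with $a\in A$ stabilises $A$ and $V$ respectively, so no term can leave these subspaces and the identities specialise verbatim to those defining $(V,l,\f)$ as a representation of $(A,\circ,\a)$. I expect this bookkeeping, rather than any genuine computation, to be the main thing to get right, the actual content being carried entirely by the two semidirect-product propositions.
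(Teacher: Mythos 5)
Your proof is correct, but part (a) follows a genuinely different route from the paper's. For part (b) you do exactly what the paper does: form the semidirect Hom-pre-Jordan algebra $A\ltimes_{l,r}^{\a,\f}V$ via Proposition \ref{semidirectproduct hompreJordan}, compute the anti-commutator to recognize the semidirect-product multiplication of Proposition \ref{semidirect product hom jordan} with $\r=l+r$, and invoke the ``if and only if'' there. For part (a), however, the paper simply observes that the bimodule axioms \eqref{rephomprejor1} (its first half), \eqref{rephomprejor2} and \eqref{rephomprejor3} are \emph{verbatim} the representation axioms \eqref{representation} with $\r=l$, so nothing at all needs to be proved; you instead apply the characterization of Hom-pre-Jordan algebras (associated Hom-Jordan algebra plus left multiplication is a representation) to the semidirect product, and then restrict the acting elements to the subalgebra $A$ and the module to the invariant subspace $V$, checking that $\mathcal L(x)|_V=l(x)$, $(\a\oplus\f)|_V=\f$ and $x\bullet y=x\circ y$. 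Your invariance bookkeeping is sound, since every term in the three identities stays inside $A$ or $V$ as appropriate, so the identities do specialize to those for $(V,l,\f)$; the argument is just longer than necessary. What your route buys is uniformity --- both (a) and (b) become consequences of the same semidirect-product machinery, with no need to inspect the six bimodule identities --- and it implicitly isolates a reusable fact (restriction of a representation to a subalgebra acting on an invariant subspace is again a representation) that the paper never states. What the paper's route buys is immediacy: (a) is true by inspection of the definitions, which is why the authors dispatch it in one line.
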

\begin{proof}
(a) follows immediately from Eqs. \eqref{rephomprejor2}-\eqref{rephomprejor3}.
For (b),  by Proposition \ref{semidirectproduct hompreJordan},   $A\ltimes_{l,r}^{\a,\f} V$ is a Hom-pre-Jordan algebra. Consider its associated Hom-Jordan algebra $(A \oplus V, \widetilde{\o}, \a+\f)$, we have
\begin{align*}
& (x+u) \widetilde{\o} (y+v)=(x+u)\ast (y+v) + (y+v)\ast (x+u) \\
&= x \c y+ l(x)v+ r(y)u + y \c x + l(y)u+ r(x)v \\
&= x \o y + (l+r)(x)v + (l+r)(y)u .
\end{align*}
According to Proposition \ref{semidirect product hom jordan}, we deduce that  $(V,l+r,\f)$ is a representation of $(A,\circ,\a)$.
\end{proof}
\begin{df}
Let $(A,\c,\a)$ be a Hom-pre-Jordan algebra and $(V,l,r,\f)$ be a bimodule.  A linear map $T:V \to A$ is called an $\mathcal{O}$-operator of  $(A,\c,\a)$ associated to  $(V,l,r,\f)$ if
\begin{align}\label{O-operator hom preJordan}
& T \f= \a T,\\
& T(u).T(v)=T\big(l(T(u))v+ r(T(v))u \big), \quad \forall u,v \in V.
\end{align}
\end{df}
\begin{rem}
  If $T$ is  an $\mathcal{O}$-operator of a Hom-pre-Jordan algebra  $(A,\c,\a)$ associated to  $(V,l,r,\f)$, then $T$ is  an $\mathcal{O}$-operator of its associated  Hom-Jordan algebra  $(A,\circ,\a)$ associated to  $(V,l+r,\f)$.
\end{rem}
%%%%%%%%%%%%%%%%%%%%%%%%%%%%%%%%%%%%%%%%%%%%
\section{Hom-J-dendriform algebras}
%%%%%%%%%%%%%%%%%%%%%%%%%%%%%%%%%%%%%%%%%%%%%
\begin{df}
A Hom-J-dendriform algebra is a quadruple $(A,\prec,\succ,\a)$, where $A$ is a vector space equipped with a linear map  $\a: A \to A$ and  two products denoted by $\pt, \gr: A \otimes A \to A$ satisfying the following identities $($for any $x,y,z,u \in A)$
\begin{align}
& \a(x \o y) \gr \a(z \gr u)+  \a(y \o z) \gr \a(x \gr u)+  \a(z \o x) \gr \a(y \gr u) \nonumber \\
\label{Hom-J-dend alg1}&= \a^2(x) \gr [(y\o z) \gr \a(u)]+ \a^2(y) \gr [(z\o x) \gr \a(u)]+ \a^2(z) \gr [(x\o y) \gr \a(u)],\\
& \a(x \o y) \gr \a(z \gr u)+  \a(y \o z) \gr \a(x \gr u)+  \a(z \o x) \gr \a(y \gr u) \nonumber \\
\label{Hom-J-dend alg2}&=\a^2(x) \gr[\a(y) \gr (z \gr u)] +\a^2(z) \gr[\a(y) \gr (x \gr u)] +[\a(y) \o (z \o x)]\gr \a^2(u),\\
&\a(x \o y)\gr \a(z \pt u)+\a(x \c z)\pt \a(y\diamond u)+ \a(y\c z)\pt \a(x \diamond u) \nonumber\\
\label{Hom-J-dend alg3}&=\a^2(x)\gr [\a(z)\pt (y \diamond u)]+ \a^2(y)\gr [\a(z)\pt (x \diamond u)]+[(x \o y)\c \a(z)]\pt\a^2(u),\\
& \a(z \c y)\pt \a(x\diamond u)+ \a(x \c y)\pt \a(z\diamond u)+ \a(x \o z)\gr \a(y \pt u) \nonumber\\
\label{Hom-J-dend alg4}&=\a^2(x)\gr [(z \c y) \pt \a(u)]+\a^2(z)\gr [(x \c y) \pt \a(u)]+\a^2(y)\pt [(x\o z)\diamond \a(u)],\\
&\a(x\o y)\gr \a(z \pt u)+\a(x\c z)\gr \a(y \diamond u)+\a(y \c z)\pt \a(x \diamond u)\nonumber\\
\label{Hom-J-dend alg5}&=\a^2(x)\gr [\a(y)\gr (z \pt u)]+ \a^2(z)\pt [\a(y)\diamond (z \diamond u)]+[\a(y)\c (x \c z)]\pt \a^2(u),
\end{align}
where
\begin{align}
&x\c y=x \gr y+ y \pt x,   \label{produit point} \\
& x \diamond y= x \gr y +x \pt y, \label{produit diamond} \\
& x\o y=x\c y+ y \c x=x\diamond y+y\diamond x. \label{produit rond}
\end{align}
\end{df}
\begin{rem}
  Let $(A,\pt,\gr,\a)$ be a Hom-J-dendriform algebra, if $\pt:=0$ then $(A,\gr,\a)$ is a Hom-pre-Jordan algebra.
\end{rem}

\begin{prop}
Let $(A,\pt,\gr,\a)$ be a Hom-J-dendriform algebra.
\begin{itemize}
\item [(a)] The product given by Eq. \eqref{produit point} defines a Hom-pre-Jordan algebra $(A,\c,\a)$, called the associated vertical Hom-pre-Jordan algebra.
\item [(b)]The product given by Eq. \eqref{produit diamond} defines a Hom-pre-Jordan algebra $(A,\diamond,\a)$, called the associated horizontal Hom-pre-Jordan algebra.
\item [(c)] The associated vertical and horizontal Hom-pre-Jordan algebras $(A,\c,\a)$ and $(A,\diamond,\a)$ have the same associated Hom-Jordan algebra $(A,\o,\a)$ defined by Eq. \eqref{produit rond}, called the associated Hom-Jordan algebra of $(A,\pt,\gr,\a)$.
\end{itemize}
\end{prop}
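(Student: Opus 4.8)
The plan is to establish (a) and (b) by checking the two defining identities \eqref{hom prejordan1} and \eqref{hom prejordan2} of a Hom-pre-Jordan algebra for each of the products $\c$ and $\di$ separately, and then to obtain (c) almost for free from (a), (b) and the Proposition asserting that \eqref{prejordan=>jordan} equips any Hom-pre-Jordan algebra with an associated Hom-Jordan algebra.

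For part (a), I would substitute the definition $a\c b=a\gr b+b\pt a$ of the vertical product into both sides of \eqref{hom prejordan1} and of \eqref{hom prejordan2}, expanding every outer and inner occurrence of $\c$ and using the compatibility of $\a$ with $\pt,\gr$ (and hence with $\o$ via \eqref{produit rond}) to move $\a$ onto the products. Each side then becomes a sum of terms in $\pt$ and $\gr$, which I would sort according to where the $\gr$'s and the $\pt$'s sit. The block in which every slot carries $\gr$ reproduces, for identity \eqref{hom prejordan1}, exactly the pure-$\gr$ relation \eqref{Hom-J-dend alg1}, and for identity \eqref{hom prejordan2} the relation \eqref{Hom-J-dend alg2} (combined with \eqref{Hom-J-dend alg1}); the remaining terms, which carry one or two $\pt$'s, regroup --- after the appropriate cyclic relabellings of $x,y,z$ --- into the left-minus-right sides of the mixed relations \eqref{Hom-J-dend alg3} and \eqref{Hom-J-dend alg4} (for \eqref{hom prejordan1}) and of \eqref{Hom-J-dend alg5} (for \eqref{hom prejordan2}). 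Each pre-Jordan identity for $\c$ thus appears as a $\mathbb{K}$-linear combination of the five defining relations of the Hom-J-dendriform algebra and therefore vanishes.

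For part (b), the computation is completely parallel: I would substitute $a\di b=a\gr b+a\pt b$ into \eqref{hom prejordan1} and \eqref{hom prejordan2}, expand, and again sort by the positions of $\gr$ and $\pt$. The only difference is that the $\pt$-factors now occupy the other arguments, so the same five relations \eqref{Hom-J-dend alg1}--\eqref{Hom-J-dend alg5} reassemble, in a different grouping, into the two pre-Jordan identities for $\di$.

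For part (c), I would first record the elementary identity
\[
x\c y+y\c x=(x\gr y+y\pt x)+(y\gr x+x\pt y)=x\gr y+x\pt y+y\gr x+y\pt x=x\di y+y\di x,
\]
which is precisely \eqref{produit rond}; hence the associated Hom-Jordan products of $(A,\c,\a)$ and of $(A,\di,\a)$ coincide with the single product $\o$. By part (a) the triple $(A,\c,\a)$ is a Hom-pre-Jordan algebra, so the Proposition built on \eqref{prejordan=>jordan} makes $(A,\o,\a)$ a Hom-Jordan algebra, and part (b) shows it is at the same time the associated Hom-Jordan algebra of the horizontal structure. The main obstacle is entirely in (a) and (b): after substitution each pre-Jordan identity expands into a large collection of $\pt/\gr$-terms sitting under the cyclic sums $\circlearrowleft_{x,y,z}$, and the real work is to verify that they regroup --- with the correct relabellings --- into the claimed combination of the five defining relations. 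Sorting the terms by the placement of $\gr$ versus $\pt$ before attempting any cancellation is what makes this matching tractable, and the compatibility of $\a$ with the products is used throughout to align the arguments of $\a$ on the two sides.
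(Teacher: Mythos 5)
Your proposal is correct and follows essentially the same route as the paper's proof: expand $\c$ (resp.\ $\di$) in terms of $\pt$ and $\gr$, sort the resulting terms by the placement of $\pt$ versus $\gr$, recognize each block (after cyclic relabelling and using the compatibility of $\a$ with the products) as an instance of the defining relations \eqref{Hom-J-dend alg1}--\eqref{Hom-J-dend alg5}, and obtain (c) directly from \eqref{produit rond}. The only minor discrepancy is bookkeeping: in the paper's explicit computation for (a), identity \eqref{hom prejordan1} is disposed of by \eqref{Hom-J-dend alg1} together with three cyclic instances of \eqref{Hom-J-dend alg3} alone, so \eqref{Hom-J-dend alg4} is not actually needed at that stage; this does not affect the validity of your plan, since your claim is only that each pre-Jordan identity is a linear combination of the five relations.
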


\begin{proof}
We will juste prove (a), Let $x,y,z,u\in A$
\begin{align*}
   &[\a(x) \circ \a(y)]\c[\a(z)\c \a(u)]+ [\a(y) \circ \a(z)]\c[\a(x)\c \a(u)]+ [\a(z) \circ \a(x)]\c[\a(y)\c \a(u)]   \\
   &=[\a(x) \circ \a(y)]\gr[\a(z)\gr \a(u)]+ [\a(y) \circ \a(z)]\gr[\a(x)\gr \a(u)]+ [\a(z) \circ \a(x)]\gr[\a(y)\gr \a(u)]\\
   &+[\a(x) \circ \a(y)]\gr[\a(u)\pt \a(z)]+ [\a(x) \c \a(u)]\pt[\a(y)\diamond \a(z)]+ [\a(y) \c \a(u)]\pt[\a(x)\diamond \a(z)]\\
   &+[\a(y) \circ \a(z)]\gr[\a(u)\pt \a(x)]+ [\a(z) \c \a(u)]\pt[\a(y)\diamond \a(x)]+ [\a(y) \c \a(u)]\pt[\a(z)\diamond \a(x)]\\
   &+[\a(z) \circ \a(x)]\gr[\a(u)\pt \a(y)]+ [\a(x) \c \a(u)]\pt[\a(z)\diamond \a(y)]+ [\a(z) \c \a(u)]\pt[\a(x)\diamond \a(y)]\\
&=\a^2(x)\gr[(y \circ z) \gr \a(u)]+ \a^2(y)\gr[(z \circ x) \gr \a(u)]+\a^2(z)\gr[(x \circ y) \gr \a(u)]\\
   &+\a^2(x)\gr[\a(u)\pt(y \diamond z) ]+ \a^2(y)\gr[ \a(u)\pt(x \diamond z)]+[(x \circ y) \c \a(u)]\pt\a^2(x)\\
   &+\a^2(z)\gr[\a(u)\pt(y \diamond x) ]+ \a^2(y)\gr[\a(u)\pt(z \diamond x) ]+[(z \circ y) \c \a(u)]\pt\a^2(x)\\
   &+\a^2(z)\gr[\a(u)\pt(x \diamond y) ]+ \a^2(x)\gr[\a(u)\pt(z \diamond y) ]+[(z \circ x) \c \a(u)]\pt\a^2(y)\\
&=\a^2(x)\c[(y \circ z) \c \a(u)]+ \a^2(y)\c[(z \circ x) \c \a(u)]+\a^2(z)\c[(x \circ y) \c \a(u)].
\end{align*}
Similarly, we can get \eqref{hom prejordan2}.
\end{proof}

\begin{prop}
Let $(A,\pt,\gr,\a)$ be a Hom-J-dendriform algebra. Then $(A,L_{\gr},R_{\pt},\a)$ is a bimodule of  its  associated horizontal Hom-pre-Jordan algebra $(A,\diamond,\a)$.
\end{prop}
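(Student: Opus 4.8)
The plan is to verify directly that $(A,L_\gr,R_\pt,\a)$ satisfies the six defining conditions \eqref{rephomprejor1}--\eqref{rephomprejor6} of a bimodule over the horizontal Hom-pre-Jordan algebra $(A,\diamond,\a)$. I set $l=L_\gr$ (so $l(x)v=x\gr v$), $r=R_\pt$ (so $r(x)v=v\pt x$) and $\f=\a$; the product playing the role of the pre-Jordan multiplication in the bimodule axioms is $\diamond$, and its associated Hom-Jordan product is $\o$. The preliminary step is to fix a translation dictionary: every composite of the form $x\o y$, $x\diamond y$ or $x\c y$ is expanded through \eqref{produit point}--\eqref{produit rond} into the primitive operations $\gr$ and $\pt$, so that each bimodule axiom, once evaluated on a test element $u\in A$, becomes an identity in $\gr$, $\pt$ and $\a$ alone. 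Condition \eqref{rephomprejor1} then says precisely that $\a$ is multiplicative for $\gr$ and for $\pt$, so it holds in the multiplicative setting in which these axioms are written.

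I would next dispatch the two ``left'' conditions \eqref{rephomprejor2} and \eqref{rephomprejor3}, which involve only $l=L_\gr$ and together assert that $(A,L_\gr,\a)$ is a representation of $(A,\o,\a)$ in the sense of \eqref{representation}. Evaluated on $u$ and rewritten with $\a$ multiplicative, \eqref{rephomprejor2} reads
\[
\circlearrowleft_{x,y,z}\a^2(x)\gr[(y\o z)\gr\a(u)]=\circlearrowleft_{x,y,z}[\a(x)\o\a(y)]\gr[\a(z)\gr\a(u)],
\]
which is exactly \eqref{Hom-J-dend alg1}. Condition \eqref{rephomprejor3} is the more delicate one: since \eqref{Hom-J-dend alg1} and \eqref{Hom-J-dend alg2} share the same left-hand side, they furnish a relation between the two natural bracketings of a triple $\gr$-product, and I would feed this relation, together with the commutativity of $\o$ and suitable relabellings of $x,y,z$, into the expansion of \eqref{rephomprejor3}. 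No single J-dendriform axiom reproduces \eqref{rephomprejor3} on the nose, so this is the first point at which genuine bookkeeping is required.

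I would then treat the three ``mixed'' conditions \eqref{rephomprejor4}, \eqref{rephomprejor5} and \eqref{rephomprejor6}, which involve both $l=L_\gr$ and $r=R_\pt$, matching them against the three mixed axioms \eqref{Hom-J-dend alg3}, \eqref{Hom-J-dend alg4} and \eqref{Hom-J-dend alg5}. Here every operator such as $r(x\o z)$ or $l(x\o y)$ unfolds, via $x\o z=x\gr z+x\pt z+z\gr x+z\pt x$, into several $\gr/\pt$-terms, so the expansions are longer; the method is unchanged, namely expand both sides fully, cancel the common terms, and recognise the remainder as one of \eqref{Hom-J-dend alg3}--\eqref{Hom-J-dend alg5}, possibly after symmetrising and using commutativity of $\o$.

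The main obstacle is precisely this matching: the six bimodule conditions and the five J-dendriform conditions are not in term-by-term correspondence, so for each bimodule axiom one must locate the correct linear combination and variable substitution of the defining identities that reproduces it, while keeping control of the expansions of $\c$, $\diamond$ and $\o$. I expect \eqref{rephomprejor3} and the first mixed condition \eqref{rephomprejor4} to be the hardest cases, since these are where the non-associativity of $\o$ forces one to combine two J-dendriform identities rather than quote a single one. Once every condition has been reduced to (a combination of) \eqref{Hom-J-dend alg1}--\eqref{Hom-J-dend alg5}, the verification is complete.
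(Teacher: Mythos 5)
Your overall strategy is the same as the paper's: evaluate each bimodule condition on a test element $u$, rewrite $L_{\gr}$ and $R_{\pt}$ as products, invoke multiplicativity of $\a$ together with the expansions \eqref{produit point}--\eqref{produit rond}, and match the result against the Hom-J-dendriform axioms. This is precisely what the paper does, writing out \eqref{rephomprejor2} (via \eqref{Hom-J-dend alg1}) and \eqref{rephomprejor5} (via \eqref{Hom-J-dend alg3}) and declaring the remaining identities similar. Your treatment of \eqref{rephomprejor1} and \eqref{rephomprejor2} is correct, and your explicit observation that \eqref{rephomprejor1} forces $\a$ to be multiplicative with respect to $\pt$ and $\gr$ is a hypothesis the paper uses silently. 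The flaw in your plan is the central structural claim that the bimodule conditions and the J-dendriform axioms ``are not in term-by-term correspondence'', so that \eqref{rephomprejor3} and the mixed conditions require genuine linear combinations of axioms; this is false, and it is exactly the point that would send you on a long detour.

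The definition of a Hom-J-dendriform algebra is engineered so that, after a relabelling of the four variables and use of commutativity of $\o$ and multiplicativity of $\a$, each of \eqref{rephomprejor2}--\eqref{rephomprejor6} becomes a single one of \eqref{Hom-J-dend alg1}--\eqref{Hom-J-dend alg5}: \eqref{rephomprejor2} is \eqref{Hom-J-dend alg1} as it stands; \eqref{rephomprejor3} is \eqref{Hom-J-dend alg2}, because $[\a(y)\o(z\o x)]\gr\a^2(u)=[(x\o z)\o\a(y)]\gr\a^2(u)$; \eqref{rephomprejor5} is \eqref{Hom-J-dend alg3} under $(y,z,u)\mapsto(z,u,y)$ --- this substitution is exactly the computation the paper displays --- and \eqref{rephomprejor4}, \eqref{rephomprejor6} are \eqref{Hom-J-dend alg4}, \eqref{Hom-J-dend alg5} under analogous relabellings (reading the corrupted symbols in \eqref{rephomprejor4}--\eqref{rephomprejor6} as $\diamond$). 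In particular your assertion that ``no single J-dendriform axiom reproduces \eqref{rephomprejor3} on the nose'' is wrong: your detour through the relation obtained by equating the right-hand sides of \eqref{Hom-J-dend alg1} and \eqref{Hom-J-dend alg2} does land on the result, but only because it silently re-derives the content of \eqref{Hom-J-dend alg2}. The grain of truth behind your suspicion is a misprint, not a mathematical subtlety: in \eqref{rephomprejor3}, comparison with \eqref{hom prejordan2} shows that $l(\a^2(x))l(\a(z))l(y)$ must read $l(\a^2(x))l(\a(y))l(z)$; taken literally, the printed condition reduces, after cancelling against \eqref{Hom-J-dend alg2}, to the false identity $\a^2(x)\gr[\a(z)\gr(y\gr u)]=\a^2(x)\gr[\a(y)\gr(z\gr u)]$, which no combination of the five axioms can repair. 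Once that correction is made, all six verifications are one-line relabelling exercises of the kind the paper writes out, and your plan closes without any of the ``bookkeeping'' you anticipate.
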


\begin{proof} We  check  Eq. \eqref{rephomprejor2} and Eq. \eqref{rephomprejor5}.
In fact, for any $x,y,z,u \in A$, we have
\begin{align*}
& L_{\gr}(\a^2(x))L_{\gr}(y\circ z)  \a(u) + L_{\gr}(\a^2(y))L_{\gr}(z\circ x)  \a(u)  + L_{\gr}(\a^2(z))L_{\gr}(x\circ y)  \a(u) \\
&=\a^2(x)\gr [(y\circ z) \gr \a(u) ]+\a^2(y) \gr[(z\circ x) \gr \a(u)]  + \a^2(z) \gr[(x\circ y) \gr \a(u) ] \\
&=  \a(x \o y) \gr \a(z \gr u)+  \a(y \o z) \gr \a(x \gr u)+  \a(z \o x) \gr \a(y \gr u)  \\
&= L_{\gr} (\a(x) \o \a(y)) L_{\gr}(\a(z))\a(u) +   L_{\gr} (\a(y) \o \a(z)) L_{\gr}(\a(x))\a(u) +  L_{\gr} (\a(z) \o \a(x)) L_{\gr}(\a(y))\a(u) .
\end{align*}
Moreover,
\begin{align*}
& \a\big(R_{\pt}(z \diamond y)L_{\gr}(x) u+ R_{\pt}(x \diamond y)R_{\pt}(z)u +
 L_{\gr}(x \circ z)R_{\pt}(y) u+ R_{\pt}(x \diamond y)L_{\gr}(z) u+ R_{\pt}(z \diamond y)R_{\pt}(x)u\big) \\
&= \a(x \gr u) \pt \a(z \diamond y) + \a(u \pt z) \pt \a(x \diamond y) + \a(x \o z)\gr \a(u \pt y)   \\
&+  \a(z \gr u) \pt \a(x \diamond y) + \a(u \pt x)\pt \a(z \diamond y) \\
&=  \a(x \c u) \pt \a(z \diamond y)  +  \a(z \c u) \pt \a(x \diamond y) + \a(x \o z)\gr \a(u \pt y) \\
&= \a^2(x)\gr [\a(u)\pt (z \diamond y)]+ \a^2(z)\gr [\a(u)\pt (x \diamond y)]+[(x \o z)\c \a(u)]\pt\a^2(y)\\
&= L_{\gr}(\a^2(x)) R_{\pt}(z \diamond y)\a(u) +  L_{\gr}(\a^2(z)) R_{\pt}(x \diamond y)\a(u) \\
 &+ R_{\pt}(\a^2(y))L_{\gr}(x \o z)\a(u) +  R_{\pt}(\a^2(y))R_{\pt}(x \o z) \a(u) .
\end{align*}
Other identities can be proved using similar computations.
\end{proof}

\begin{prop}
Let $(A,\pt,\gr)$ be a J-dendriform algebra  and $\a:A \to A$ be an algebra morphism. Then $(A,\pt_\a,\gr_\a,\a)$ is a Hom-J-dendriform algebra, where for any $x,y \in A$
$$x \pt_\a y= \a(x) \pt \a(y),\ \ \ x \gr_\a y= \a(x) \gr \a(y).$$
\end{prop}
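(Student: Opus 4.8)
The plan is to treat this as a Yau-type twisting and verify the five defining axioms by a single uniform reduction to the untwisted case. The essential preliminary observation is that, because $\alpha$ is an algebra morphism of $(A,\prec,\succ)$, the twisted products satisfy $x\prec_\alpha y=\alpha(x)\prec\alpha(y)=\alpha(x\prec y)$ and $x\succ_\alpha y=\alpha(x)\succ\alpha(y)=\alpha(x\succ y)$; that is, $\prec_\alpha=\alpha\circ\prec$ and $\succ_\alpha=\alpha\circ\succ$ as bilinear maps. Denoting by $\cdot_\alpha,\diamond_\alpha,\circ_\alpha$ the derived products built from $\prec_\alpha,\succ_\alpha$ through \eqref{produit point}, \eqref{produit diamond} and \eqref{produit rond}, a short computation using the morphism property gives $x\cdot_\alpha y=\alpha(x\cdot y)$, $x\diamond_\alpha y=\alpha(x\diamond y)$ and $x\circ_\alpha y=\alpha(x\circ y)$. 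First I would record these five identities, since they are the only place where the hypothesis on $\alpha$ is used.

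The second step is to show that each axiom \eqref{Hom-J-dend alg1}–\eqref{Hom-J-dend alg5} for $(A,\prec_\alpha,\succ_\alpha,\alpha)$ collapses to the corresponding J-dendriform axiom for $(A,\prec,\succ)$ — that is, to \eqref{Hom-J-dend alg1}–\eqref{Hom-J-dend alg5} specialized to $\alpha=\mathrm{id}$ — evaluated at $\alpha^3(x),\alpha^3(y),\alpha^3(z),\alpha^3(u)$. The mechanism is uniform: in any monomial one replaces each twisted product by $\alpha$ applied to the corresponding original product, and then uses that $\alpha$ is a morphism to push all the $\alpha$'s onto the four leaves. For instance, the first term on the left of \eqref{Hom-J-dend alg1} becomes
\[
\alpha(x\circ_\alpha y)\succ_\alpha\alpha(z\succ_\alpha u)
=\alpha^3(x\circ y)\succ\alpha^3(z\succ u)
=(\alpha^3(x)\circ\alpha^3(y))\succ(\alpha^3(z)\succ\alpha^3(u)),
\]
while the first term on the right becomes
\[
\alpha^2(x)\succ_\alpha\big[(y\circ_\alpha z)\succ_\alpha\alpha(u)\big]
=\alpha^3(x)\succ\big((\alpha^3(y)\circ\alpha^3(z))\succ\alpha^3(u)\big).
\]
Writing $X=\alpha^3(x)$, $Y=\alpha^3(y)$, $Z=\alpha^3(z)$, $U=\alpha^3(u)$, the whole of \eqref{Hom-J-dend alg1} reads exactly as the first J-dendriform axiom in $X,Y,Z,U$, which holds since $(A,\prec,\succ)$ is a J-dendriform algebra and that identity is valid for all elements. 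The same reduction is then carried out for \eqref{Hom-J-dend alg2}–\eqref{Hom-J-dend alg5}.

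The only point that needs care — and the main obstacle — is the bookkeeping guaranteeing this reduction is clean: on both sides of each of the five identities every monomial must carry exactly the same total $\alpha$-weight (namely $\alpha^3$ on each leaf), so that after distributing $\alpha$ the two sides are literally $\alpha^3$ applied to the two sides of one untwisted axiom, with the cyclic sums and nesting patterns matching. This uniformity is forced because each axiom is multilinear of degree one in each of $x,y,z,u$ and every monomial is a twofold nested product of the four inputs carrying the same prescribed outer $\alpha$'s; I would verify the weight count once per identity on a representative left-hand and a representative right-hand term (including a term such as $[(x\circ_\alpha y)\cdot_\alpha\alpha(z)]\prec_\alpha\alpha^2(u)$, whose explicit $\alpha$-powers differ but which still yields weight $\alpha^3$ on each leaf), the remaining terms following by the identical count. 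Since no term mixes weights, no cancellation or compatibility beyond the original J-dendriform axioms is needed, and the proof closes.
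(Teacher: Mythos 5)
Your proof is correct and is essentially the argument the paper leaves implicit---its own proof of this proposition is just the word ``Straightforward.'' The reduction you describe (each twisted product is $\a$ composed with the original one, the morphism property pushes all twists onto the leaves with uniform weight $\a^{3}$, and each Hom-axiom becomes the corresponding untwisted J-dendriform axiom evaluated at $\a^{3}(x),\a^{3}(y),\a^{3}(z),\a^{3}(u)$) is the standard Yau-twisting argument the authors intend, so nothing further is needed.
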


\begin{proof}
Straightforward.
\end{proof}

\begin{prop}
Let $(A,\pt,\gr,\a)$ be a Hom-J-dendriform algebra. Define two bilinear products $\pt^t, \gr^t$ on $A$  by
\begin{align}
    x \pt^t y =y \pt x, \ x \gr^t y =y \gr x, \quad \forall x,y \in A.
\end{align}
Then $(A, \pt^t,\gr^t,\a)$ is a Hom-J-dendriform algebra called the transpose of $A$.  Moreover, its associated horizontal Hom-pre-Jordan algebra is the associated vertical Hom-pre-Jordan algebra $(A, \c,\a)$ of $(A, \pt,\gr,\a)$
and its associated vertical Hom-pre-Jordan algebra is the associated horizontal
Hom-pre-Jordan algebra $(A,\diamond,\a)$ of $(A,\pt,\gr,\a)$.
\end{prop}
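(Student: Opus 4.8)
The proof splits into verifying that $(A,\pt^t,\gr^t,\a)$ satisfies the five defining identities and then identifying its three associated products. I would begin with the cheap observation that the transpose leaves the associated Hom-Jordan product unchanged: substituting $a\gr^t b=b\gr a$ and $a\pt^t b=b\pt a$ into \eqref{produit rond} gives
\begin{equation*}
x\o^t y=x\gr^t y+y\gr^t x+x\pt^t y+y\pt^t x=x\gr y+y\gr x+x\pt y+y\pt x=x\o y ,
\end{equation*}
so $\o^t=\o$. The identification of the vertical product $\c^t$ and the horizontal product $\diamond^t$ of the transpose with, respectively, the horizontal product $\diamond$ and the vertical product $\c$ of $(A,\pt,\gr,\a)$ is then a direct term-by-term comparison obtained by inserting $\gr^t,\pt^t$ into \eqref{produit point} and \eqref{produit diamond}; this needs no structural input and can be recorded first.

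The substance of the proof is the verification of \eqref{Hom-J-dend alg1}--\eqref{Hom-J-dend alg5} for $(A,\pt^t,\gr^t,\a)$. My approach is to write out each of the five identities with every operation replaced by its transpose, and then to use $a\gr^t b=b\gr a$, $a\pt^t b=b\pt a$, together with $\o^t=\o$ and the expressions for $\c^t,\diamond^t$ just obtained, to rewrite each identity entirely in terms of the original operations $\gr,\pt,\c,\diamond,\o$. The goal is to show that every such rewritten identity coincides, after a suitable permutation of the four free arguments $x,y,z,u$, with one of the original identities \eqref{Hom-J-dend alg1}--\eqref{Hom-J-dend alg5}. Concretely, reversing the two products should induce an involution of the whole system of five identities, so that the five constraints for the transpose are precisely the five constraints for $(A,\pt,\gr,\a)$ listed in a reshuffled order; since the latter hold by hypothesis, so do the former.

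The main obstacle is organisational rather than conceptual: one must track, without error, how the cyclic sums $\circlearrowleft_{x,y,z}$ and the distinguished ``acted-on'' argument $u$ behave when the operations are reversed, and pin down the exact permutation of arguments carrying each transposed identity to an original one. A cleaner bookkeeping device is available through the bimodule picture established earlier, since $(A,L_\gr,R_\pt,\a)$ is a bimodule of the horizontal Hom-pre-Jordan algebra and reversing the products interchanges the left operators $L_\gr$ with the right operators $R_\gr$ (and $L_\pt$ with $R_\pt$); this lets the five J-dendriform identities be repackaged as the bimodule axioms \eqref{rephomprejor1}--\eqref{rephomprejor6}, whose behaviour under the interchange can be tracked more transparently. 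Once the five identities are established, the transpose is a genuine Hom-J-dendriform algebra, and parts (a)--(c) of the preceding proposition, applied to it, produce its vertical, horizontal and shared Hom-Jordan structures; combined with the product identifications of the first paragraph, this yields exactly the asserted correspondence.
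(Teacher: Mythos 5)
Your proposal is correct and takes essentially the same route as the paper's proof: compute how the derived products transform under transposition and then check that transposition merely permutes the five defining identities among themselves, which the paper records explicitly as \eqref{Hom-J-dend alg1}$^t$=\eqref{Hom-J-dend alg1}, \eqref{Hom-J-dend alg2}$^t$=\eqref{Hom-J-dend alg2}, \eqref{Hom-J-dend alg3}$^t$=\eqref{Hom-J-dend alg4}, \eqref{Hom-J-dend alg4}$^t$=\eqref{Hom-J-dend alg3}, \eqref{Hom-J-dend alg5}$^t$=\eqref{Hom-J-dend alg5} --- precisely the reshuffling, with the relabeling of the arguments $x,y,z,u$, that you anticipate (your optional bimodule detour is not needed and would require the unproved converse of the bimodule proposition). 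The one point worth flagging --- it affects your ``direct term-by-term comparison'' and the paper's computation equally --- is that $\c^t=\diamond$ and $\diamond^t=\c$ come out only if $\gr$ is left untouched, i.e.\ $x\gr^t y=x\gr y$; with the statement's literal definition $x\gr^t y=y\gr x$ one instead finds $x\c^t y=y\c x$ and $x\diamond^t y=y\diamond x$, so both your argument and the paper's proof implicitly read the definition of $\gr^t$ with this correction.
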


\begin{proof}
Note first that \begin{align*}
                 & x\c^t y =x\gr^ty+y\pt^tx=x\gr y+x\pt y=x\diamond y,\\
                    & x\diamond^t y =x\gr^ty+x\pt^ty=x\gr y+y\pt x=x\c y,\\
                    &x\circ^t y =x\gr^ty+x\pt^ty+y\gr^t x+y\pt^t x=x\gr y+y\pt x+y\gr x+x\pt y=x\circ y.
                \end{align*}
                Therefore  we can easily check  (Eq. \eqref{Hom-J-dend alg1})$^t$=(Eq. \eqref{Hom-J-dend alg1}), (Eq. \eqref{Hom-J-dend alg2})$^t$=(Eq. \eqref{Hom-J-dend alg2}), (Eq. \eqref{Hom-J-dend alg3})$^t$=(Eq. \eqref{Hom-J-dend alg4}), (Eq. \eqref{Hom-J-dend alg4})$^t$=(Eq. \eqref{Hom-J-dend alg3}) and (Eq. \eqref{Hom-J-dend alg5})$^t$=(Eq. \eqref{Hom-J-dend alg5}).
                Hence $(A,\pt^t,\gr^t,\a)$ is a Hom-J-dendriform algebra.
                \end{proof}

\begin{prop} \label{hom-PJ==>hom-JD}
Let $(A,\c,\a)$ be a Hom-pre-Jordan algebra and $(V,l,r,\f)$ be a bimodule. Let $T:V \to A$ be an $\mathcal{O}$-operator of $A$ associated to $(V,l,r,\f)$.  Then there exists
a Hom-J-dendriform algebraic structure on $V$ given by
\begin{align}\label{J-dend bimodule}
    u \pt v=r(T(u))v,\ u \gr v=l(T(u))v, \quad \forall u,v \in V.
\end{align}
Therefore, there is a Hom-pre-Jordan algebra on $V$ given by Eq. \eqref{produit point}as the associated vertical Hom-pre-Jordan algebra of $(V,\pt,\gr)$ and $T$ is a homomorphism of Hom-pre-Jordan algebras. Moreover, $T(V ) = \{T(v) | v \in V\} \subset A $ is a Hom-pre-Jordan subalgebra of $(A, \c,\a)$, and there is an induced Hom-J-dendriform algebraic structure on $T(V )$ given by
\begin{align}
    T(u) \pt T(v)=T(u\pt v),\ T(u) \gr T(v)=T(u \gr v), \quad \forall u,v \in V.
\end{align}
Furthermore, its corresponding associated vertical Hom-pre-Jordan algebraic
structure on $T(V )$ is just the subalgebra of the Hom-pre-Jordan $(A, \c,\a)$, and $T$ is a
homomorphism of Hom-J-dendriform algebras.
\end{prop}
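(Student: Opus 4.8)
The plan is to follow the template of Proposition~\ref{Homjordan=>Homprejordan}: build a dictionary translating the two products on $V$ into the operators $l,r$ evaluated on elements of $A$, and then read off each of the five defining identities of a Hom-J-dendriform algebra as one of the bimodule axioms of $(V,l,r,\f)$. For $u,v,w,d\in V$ I set $x=T(u)$, $y=T(v)$, $z=T(w)$ and record the elementary translations coming from \eqref{J-dend bimodule} and the $\mathcal O$-operator relation \eqref{O-operator hom preJordan}: $u\gr v=l(x)v$ and $u\pt v=r(x)v$, so the vertical product is $u\c v=u\gr v+v\pt u=l(x)v+r(y)u$, and \eqref{O-operator hom preJordan} gives exactly $T(u\c v)=T(u)\c T(v)$. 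Adding the opposite product yields $T(u\circ v)=T(u)\circ T(v)$ for the associated Hom-Jordan product, while the horizontal product becomes $u\diamond v=(l+r)(x)v$. Together with the equivariance relations \eqref{rephomprejor1}, namely $\f l(x)=l(\a(x))\f$ and $\f r(x)=r(\a(x))\f$, and with $T\f=\a T$, this dictionary turns every monomial in $\pt,\gr,\diamond,\circ$ occurring in \eqref{Hom-J-dend alg1}--\eqref{Hom-J-dend alg5} into a composite of $l$'s and $r$'s applied to a fixed vector, whose arguments have the form $T(u)\circ T(v)$ or $T(u)\c T(v)$.

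Next I would verify the five identities one at a time. Substituting the dictionary into \eqref{Hom-J-dend alg1} and \eqref{Hom-J-dend alg2}, which involve only $\gr$ and $\circ$, and moving $\f$ across $l$ by \eqref{rephomprejor1}, collapses them into the two representation axioms \eqref{rephomprejor2} and \eqref{rephomprejor3} for $(V,l,\f)$; this is literally the computation performed in the first half of the proof of Proposition~\ref{Homjordan=>Homprejordan}. For the three remaining identities \eqref{Hom-J-dend alg3}--\eqref{Hom-J-dend alg5}, which also contain $\pt$ and $\diamond$, I would rewrite each factor in which $\diamond$ is applied to a $\circ$- or $\c$-product by means of $T(u\circ v)=T(u)\circ T(v)$ and then split it through $l+r$, and replace each bare factor $u\diamond v$ by $(l+r)(x)v$. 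Reading the level-one product symbol in the bimodule axioms \eqref{rephomprejor4}--\eqref{rephomprejor6} as the Hom-pre-Jordan product $\c$, each of \eqref{Hom-J-dend alg3}, \eqref{Hom-J-dend alg4} and \eqref{Hom-J-dend alg5} then matches \eqref{rephomprejor4}, \eqref{rephomprejor5} and \eqref{rephomprejor6} respectively (the correspondence \eqref{Hom-J-dend alg1}$\leftrightarrow$\eqref{rephomprejor2} and \eqref{Hom-J-dend alg4}$\leftrightarrow$\eqref{rephomprejor5} is exactly the one appearing in the bimodule verification for $(A,L_{\gr},R_{\pt},\a)$ earlier in this section). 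I expect the only real obstacle to be bookkeeping: keeping track of where $\f,\f^2$ and the powers $\a,\a^2$ sit, and of the $l/r$ split produced by every $\diamond$, so that after normalizing with \eqref{rephomprejor1} the two sides land in the standard shape of the corresponding bimodule axiom. None of the steps is conceptually deep, but the number of terms is large.

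Finally I would read off the stated consequences. Since $T(u\c v)=T(u)\c T(v)$ and $T\f=\a T$, the map $T$ is a homomorphism from the vertical Hom-pre-Jordan algebra $(V,\c,\f)$ (whose pre-Jordan structure is guaranteed by the vertical-product construction established earlier in this section) to $(A,\c,\a)$; hence $T(V)$ is closed under $\c$ and under $\a$ (because $\a(T(v))=T(\f(v))$), so it is a Hom-pre-Jordan subalgebra of $(A,\c,\a)$. For the induced Hom-J-dendriform structure on $T(V)$, the formulas $T(u)\gr T(v)=T(u\gr v)$ and $T(u)\pt T(v)=T(u\pt v)$ are well defined: if $T(w)=0$ then linearity of $l,r$ gives $l(0)=r(0)=0$, and \eqref{O-operator hom preJordan} forces $T(l(T(u))w)=T(u)\c T(w)=0$ and $T(r(T(u))w)=T(w)\c T(u)=0$, so that $T(u\gr v)$ and $T(u\pt v)$ depend only on $T(u)$ and $T(v)$. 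The Hom-J-dendriform axioms on $T(V)$ then follow from those on $V$ by surjectivity of $T\colon V\to T(V)$; its associated vertical product satisfies $T(u)\gr T(v)+T(v)\pt T(u)=T(u\c v)=T(u)\c T(v)$ and so coincides with the subalgebra product inherited from $A$, and $T$ is by construction a homomorphism of Hom-J-dendriform algebras.
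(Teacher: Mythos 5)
Your proposal is correct and takes essentially the same route as the paper: translate each product on $V$ through the dictionary $u\gr v=l(T(u))v$, $u\pt v=r(T(u))v$, use $T\f=\a T$, $\f l(x)=l(\a(x))\f$, $\f r(x)=r(\a(x))\f$ and $T(u\c v)=T(u)\c T(v)$ to rewrite each side, and reduce the five Hom-J-dendriform identities to the bimodule axioms \eqref{rephomprejor2}--\eqref{rephomprejor6} (the paper carries this out explicitly only for \eqref{Hom-J-dend alg1} via \eqref{rephomprejor2} and declares the rest ``similar,'' whereas you also spell out the well-definedness and homomorphism claims for $T(V)$ that the paper leaves implicit). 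The only slip is the parenthetical attribution: in the earlier bimodule verification for $(A,L_{\gr},R_{\pt},\a)$ over the \emph{horizontal} algebra, axiom \eqref{rephomprejor5} is checked using \eqref{Hom-J-dend alg3} (not \eqref{Hom-J-dend alg4}, the vertical/horizontal transpose swaps these), but this does not affect your argument, whose pairing for the present proposition is the right one.
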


\begin{proof}
For any $a,b,c,u \in V$, we put
\begin{align*}
& T(a)=x,\ T(b)=y\  \textrm{and}\ T(c)=z.
\end{align*}
Then
\begin{align*}
   & \f(a \circ b)\gr \f (c \gr u)  \\
   =& (\f(a) \gr \f(b)+\f(b) \pt \f(a)+\f(b) \gr \f(a)+\f(a) \pt \f(b)) \gr (\f(c) \gr \f(u)) \\
   =& (l(T(\f(a)))\f(b)+r(T(\f(b)))\f(a)+ l(T(\f(b)))\f(a)+r(T(\f(a)))\f(b))\gr l(T(\f(c))) \f(u) \\
   =& l(T(l(T(\f(a)))\f(b)+r(T(\f(b)))\f(a)+ l(T(\f(b)))\f(a)+r(T(\f(a)))\f(b))) l(T(\f(c))) \f(u)  \\
   =& l(T(\f(a))\c T(\f(b)) +T(\f(b))\c T(\f(a)) )  l(T(\f(c))) \f(u)  \\
   =&  l(\a(x) \circ \a(y))l(\a(z)) \f(u)
\end{align*}
and
\begin{align*}
& \f^2(a) \gr [(b \circ c) \gr \f(u)] \\
=& \f^2(a) \gr [(b \gr c + c \pt b + c \gr b+ b \pt c) \gr \f(u)] \\
=& \f^2(a) \gr  l(T(l(T(b))c+r(T(c))b+l(T(c))b+r(T(b))c)) \f(u) \\
=&  \f^2(a) \gr l(y \circ z) \f(u) \\
=& l(T( \f^2(a)))   l(y \circ z) \f(u)   \\
=& l(\a^2(x)) l(y \circ z) \f(u).
\end{align*}
Hence
\begin{align*}
   & \f(a \circ b)\gr \f (c \gr u) +  \f(b \circ c)\gr \f (a \gr u) + \f(c \circ a)\gr \f (b \gr u)   \\
   =&  l(\a(x) \circ \a(y))l(\a(z)) \f(u) + l(\a(y) \circ \a(z))l(\a(x)) \f(u) + l(\a(z) \circ \a(x))l(\a(y))\f(u) \\
  =& l(\a^2(x))l(y\circ z)  \f(u) + l(\a^2(y))l(z\circ x)  \f (u) + l(\a^2(z))l(x\circ y)  \f (u) \\
  =&  \f^2(a) \gr [(b \circ c) \gr \f(u)] + \f^2(b) \gr [(c \circ a) \gr \f(u)] +
   \f^2(c) \gr [(a \circ b) \gr \f(u)] .
\end{align*}
Therefore, Eq. \eqref{Hom-J-dend alg1} holds.  Using a similar computation, Eqs. \eqref{Hom-J-dend alg2}--\eqref{Hom-J-dend alg5} hold.
Then $(V,\pt,\gr,\f)$  is a Hom-J-dendriform algebra.
The other conclusions  can be checked similarly.

\end{proof}

\begin{cor}\label{PJ==>JD by rotabaxter}
Let $(A, \c,\a)$ be a Hom-pre-Jordan algebra and let $R$ be a Rota-Baxter
operator $($of weight zero$)$ on $A$. Then the products, given by
$$x \pt y =y \c R(x), \  x \gr y= R(x) \c y, \quad \forall x,y \in A$$
define a Hom-J-dendriform algebra on $A$ with the same twist map.
\end{cor}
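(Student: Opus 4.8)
The plan is to realize $R$ as an $\mathcal{O}$-operator on the Hom-pre-Jordan algebra associated to its \emph{regular bimodule}, and then invoke Proposition \ref{hom-PJ==>hom-JD}. First I would set up this regular bimodule: denote by $L$ and $\mathcal{R}$ the left and right multiplication operators of $(A,\c,\a)$, so that $L(x)y=x\c y$ and $\mathcal{R}(x)y=y\c x$. I claim that $(A,L,\mathcal{R},\a)$ is a bimodule of $(A,\c,\a)$ in the sense of the bimodule definition. This is the Hom-pre-Jordan analogue of the fact that an algebra is a module over itself, and it is verified by substituting $l=L$, $r=\mathcal{R}$, $\f=\a$ into the six bimodule axioms \eqref{rephomprejor1}--\eqref{rephomprejor6}, evaluating each side on an arbitrary element of $A$, and recognizing the resulting equalities as consequences of the defining relations \eqref{hom prejordan1}--\eqref{hom prejordan2} (together with the Hom-Jordan identity \eqref{somme cyc2} for the associated $\circ$); the covariance \eqref{rephomprejor1} is then exactly the multiplicativity $\a(x\c y)=\a(x)\c\a(y)$.

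With the regular bimodule in hand, I would observe that the weight-zero Rota-Baxter relation $R(x)\c R(y)=R\big(R(x)\c y+x\c R(y)\big)$, together with $R\a=\a R$, is precisely the assertion that $R\colon A\to A$ is an $\mathcal{O}$-operator of $(A,\c,\a)$ associated to $(A,L,\mathcal{R},\a)$, with $V=A$, $T=R$ and $\f=\a$. Indeed, unwinding the $\mathcal{O}$-operator equation $T(u)\c T(v)=T\big(l(T(u))v+r(T(v))u\big)$ with $l=L$ and $r=\mathcal{R}$ gives $R(u)\c R(v)=R\big(L(R(u))v+\mathcal{R}(R(v))u\big)=R\big(R(u)\c v+u\c R(v)\big)$, which is exactly the Rota-Baxter identity.

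Finally, I would apply Proposition \ref{hom-PJ==>hom-JD} to this $\mathcal{O}$-operator. It yields a Hom-J-dendriform structure on $V=A$ with twist $\f=\a$ given by $u\pt v=r(T(u))v=\mathcal{R}(R(u))v=v\c R(u)$ and $u\gr v=l(T(u))v=L(R(u))v=R(u)\c v$, which are exactly the two products in the statement, and the twist map is unchanged, as claimed. The only genuine work lies in the first step, namely checking that the regular bimodule satisfies the six bimodule axioms; I expect this to be the main (though routine) obstacle, since each axiom must be matched, after evaluation, against the correct combination of the Hom-pre-Jordan defining identities, and care is needed to track the placement of the twisting map $\a$ inside the nested products. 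An alternative, more computational route would bypass the bimodule machinery entirely by substituting $x\pt y=y\c R(x)$ and $x\gr y=R(x)\c y$ directly into \eqref{Hom-J-dend alg1}--\eqref{Hom-J-dend alg5} and reducing each to a consequence of the Hom-pre-Jordan relations and the Rota-Baxter identity, but the $\mathcal{O}$-operator approach is cleaner and is the one suggested by the placement of the corollary.
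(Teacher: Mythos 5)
Your argument is exactly the paper's (implicit) one: the paper states this corollary without proof, immediately after Proposition \ref{hom-PJ==>hom-JD}, the intended justification being precisely your observation that a Rota--Baxter operator commuting with $\a$ is an $\mathcal{O}$-operator of $(A,\c,\a)$ associated to the regular bimodule $(A,L,\mathcal{R},\a)$, whence the products $x\pt y=y\c R(x)$, $x\gr y=R(x)\c y$ and the unchanged twist map. Your flagged caveat is also accurate: axiom \eqref{rephomprejor1} for the regular bimodule forces $\a$ to be multiplicative, an assumption the paper itself uses silently here (just as it does for the adjoint representation of a Hom-Jordan algebra).
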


\begin{thm}
Let $(A, \c,\a)$ be a Hom-pre-Jordan algebra. Then there is a Hom-J-dendriform
algebra such that $(A, \c,\a)$ is the associated vertical
Hom-pre-Jordan algebra if and only if there exists an invertible $\mathcal{O}$-operator of $(A, \c,\a)$.
\end{thm}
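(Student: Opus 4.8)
The plan is to prove both implications by exhibiting a single canonical $\mathcal{O}$-operator in each direction, exactly as in the Corollary characterising compatible Hom-pre-Jordan structures on a Hom-Jordan algebra, but now one level higher. The two engines are Proposition \ref{hom-PJ==>hom-JD} (an invertible $\mathcal{O}$-operator produces a Hom-J-dendriform algebra) for the ``if'' part, and the transpose construction together with the proposition that $(A,L_\gr,R_\pt,\a)$ is a bimodule of the associated horizontal Hom-pre-Jordan algebra for the ``only if'' part.

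For the ``if'' direction, suppose $T\colon V\to A$ is an invertible $\mathcal{O}$-operator of $(A,\c,\a)$ associated to a bimodule $(V,l,r,\f)$. Proposition \ref{hom-PJ==>hom-JD} gives a Hom-J-dendriform structure on $V$ by $u\gr v=l(T(u))v$, $u\pt v=r(T(u))v$, whose associated vertical Hom-pre-Jordan algebra is carried isomorphically onto $(A,\c,\a)$ by $T$. Since $T$ is invertible we have $T(V)=A$, so the induced Hom-J-dendriform structure of that proposition lives on all of $A$, namely $x\gr y=T\big(l(x)T^{-1}(y)\big)$ and $x\pt y=T\big(r(x)T^{-1}(y)\big)$. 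Its associated vertical product is $x\gr y+y\pt x=T\big(l(x)T^{-1}(y)+r(y)T^{-1}(x)\big)$, and the defining $\mathcal{O}$-operator identity (with $u=T^{-1}(x)$, $v=T^{-1}(y)$) rewrites this as $T(u)\c T(v)=x\c y$; hence $(A,\c,\a)$ is precisely the associated vertical Hom-pre-Jordan algebra, as required.

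For the ``only if'' direction, let $(A,\pt,\gr,\a)$ be a Hom-J-dendriform algebra whose associated vertical Hom-pre-Jordan algebra is $(A,\c,\a)$, and pass to its transpose $(A,\pt^t,\gr^t,\a)$. By the transpose proposition, the associated horizontal Hom-pre-Jordan algebra of the transpose is exactly $(A,\c,\a)$, and by the proposition that $(A,L_\gr,R_\pt,\a)$ is a bimodule of the associated horizontal algebra (applied to the transpose), $(A,L_{\gr^t},R_{\pt^t},\a)$ is a bimodule of $(A,\c,\a)$. I claim the identity map is an $\mathcal{O}$-operator of $(A,\c,\a)$ associated to this bimodule: the condition $T\f=\a T$ is trivial for $T=\mathrm{id}$, $\f=\a$, while the product condition reads $x\c y=L_{\gr^t}(x)y+R_{\pt^t}(y)x=x\gr^t y+x\pt^t y=x\diamond^t y$, which equals $x\c y$ precisely because the transpose's horizontal product coincides with $\c$. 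As the identity is invertible, this produces the desired invertible $\mathcal{O}$-operator.

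The substantive content of both directions is already packaged in the cited propositions, so the only genuine obstacle is bookkeeping: in the ``if'' part one must track $T^{-1}$ carefully and, crucially, confirm that the resulting pre-Jordan product is the vertical one $x\gr y+y\pt x$ rather than the horizontal $x\gr y+x\pt y$; in the ``only if'' part one must be sure to route through the transpose so that the bimodule supplied by the horizontal-bimodule proposition is attached to $(A,\c,\a)$ and not to the horizontal algebra $(A,\diamond,\a)$. A direct alternative for the converse---checking that $(A,L_\gr,L_\pt,\a)$ is itself a bimodule of $(A,\c,\a)$ by matching the six bimodule axioms against the five Hom-J-dendriform axioms---would work too, but it merely duplicates the computation already done for the horizontal case.
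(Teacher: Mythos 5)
Your proof is correct and follows the same skeleton as the paper's: the ``if'' direction is exactly the paper's argument (apply Proposition \ref{hom-PJ==>hom-JD} to an invertible $T$, transport the structure to $T(V)=A$, then check $x\gr y+y\pt x=T\bigl(l(x)T^{-1}(y)+r(y)T^{-1}(x)\bigr)=x\c y$ via the $\mathcal{O}$-operator identity; your computation is in fact cleaner than the paper's display, which garbles the arguments of $l$ and $r$), and the ``only if'' direction uses the same canonical witness, namely $\mathrm{id}$ as an invertible $\mathcal{O}$-operator. The difference is in how you legitimize $\mathrm{id}$ as an $\mathcal{O}$-operator. The paper simply asserts that $\mathrm{id}$ is an $\mathcal{O}$-operator of $(A,\c,\a)$ associated to $(A,L_{\gr},L_{\pt},\a)$: the multiplicative identity $x\c y=L_{\gr}(x)y+L_{\pt}(y)x$ is immediate from the definition of the vertical product, but the paper never verifies that $(A,L_{\gr},L_{\pt},\a)$ is a bimodule of $(A,\c,\a)$, which the definition of an $\mathcal{O}$-operator presupposes. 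Your detour through the transpose supplies exactly this missing verification: applying the horizontal-bimodule proposition to $(A,\pt^t,\gr^t,\a)$ and using $\diamond^t=\c$ gives the bimodule property, and since $L_{\gr^t}=L_{\gr}$ and $R_{\pt^t}=L_{\pt}$ (under the reading of the transpose operations for which the conclusion $\diamond^t=\c$ actually holds), your bimodule $(A,L_{\gr^t},R_{\pt^t},\a)$ is literally the paper's $(A,L_{\gr},L_{\pt},\a)$; so your argument is the paper's proof with its gap filled. One caution: with the transpose operations as literally defined in the paper, $x\gr^t y=y\gr x$ and $x\pt^t y=y\pt x$, one gets $x\diamond^t y=y\diamond x$ rather than $x\c y$; the transpose proposition's computations are only consistent if $\gr^t=\gr$ and only $\pt$ is reversed. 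Your proof is therefore correct relative to the paper's stated propositions, but if you want a self-contained argument, the ``direct alternative'' you mention at the end (verifying the bimodule axioms for $(A,L_{\gr},L_{\pt},\a)$ directly against the five Hom-J-dendriform identities) would avoid any dependence on that proposition.
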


\begin{proof}
Suppose that $(A,\pt,\gr,\a)$ is a Hom-J-dendriform algebra with respect to  $(A,\c,\a)$. Then  the identity map $id: A \to A$ is an $\mathcal{O}$-operator of $(A,\c,\a)$ associated to $(A,L_{\gr},L_{\pt},\a)$, where, for any $x,y \in A$
$$L_{\gr}(x)(y)=x \gr y\ \textrm{and}\ L_{\pt}(x)(y)=x \pt y.$$
Conversely, let $T: V \to A$ be an $\mathcal{O}$-operator of $(A,\c,\a)$ associated to a bimodule $(V,l,r,\f)$.  By Proposition \ref{hom-PJ==>hom-JD},  there exists a Hom-J-dendriform algebra  on $T(V)=A$ given by
$$T(u) \pt  T(v)=T(r(T(u)) v),\ T(u) \gr T(v)=T(l(T(u))v),\ \forall u,v \in V.$$
By setting $x=T(u)$ and $y=T(v)$, we get
$$x \pt y=T(r(x)T^{-1}(y))\ \textrm{and}\  x \gr y=T(l(x)T^{-1}(y)).$$
Finally, for any $x,y \in A$, we have
\begin{align*}
& x \gr y+ y \pt x=T(r(x)T^{-1}(y))  + T(l(x)T^{-1}(y))  \\
=& T(r(x)T^{-1}(y)  + l(x)T^{-1}(y))  =T(T^{-1}(x))\c  T(T^{-1}(y))=x \c y.
\end{align*}

\end{proof}

\begin{lem}\label{commuting rotabaxter}
Let $R_1$ and $R_2$ be two commuting Rota-Baxter operators $($of weight
zero$)$ on a Hom-Jordan algebra $(A,\circ,\a)$. Then $R_2$ is a Rota-Baxter operator $($of weight zero$)$ on the Hom-pre-Jordan algebra $(A,\c,\a)$, where
$$x \c y= R_1(x) \circ y, \quad \forall x,y \in A.$$
\end{lem}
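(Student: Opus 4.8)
The plan is to unwind the statement ``$R_2$ is a Rota-Baxter operator of weight zero on the Hom-pre-Jordan algebra $(A,\c,\a)$'' into the $\mathcal{O}$-operator condition with respect to the regular (adjoint) bimodule $(A,L_\c,R_\c,\a)$, where $L_\c(x)(y)=x\c y$ and $R_\c(x)(y)=y\c x$; this is the bimodule used implicitly in Corollary \ref{PJ==>JD by rotabaxter}. With this identification, being such a Rota-Baxter operator amounts to the two requirements $R_2\a=\a R_2$ and
$$R_2(x)\c R_2(y)=R_2\big(R_2(x)\c y+x\c R_2(y)\big),\qquad \forall x,y\in A.$$
The first is immediate: since $R_2$ is a Rota-Baxter operator of weight zero on $(A,\circ,\a)$, it is an $\mathcal{O}$-operator associated to the adjoint representation, whose compatibility condition $T\f=\a T$ (with $T=R_2$, $\f=\a$) is exactly $R_2\a=\a R_2$. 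It therefore remains to establish the displayed identity.

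First I would substitute the definition $x\c y=R_1(x)\circ y$ into both sides. The left-hand side becomes $R_1R_2(x)\circ R_2(y)$, and the right-hand side becomes $R_2\big(R_1R_2(x)\circ y+R_1(x)\circ R_2(y)\big)$. Next, invoking the hypothesis that $R_1$ and $R_2$ commute, I replace every occurrence of $R_1R_2$ by $R_2R_1$; in particular the left-hand side reads $R_2(R_1(x))\circ R_2(y)$.

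Then I apply the weight-zero Rota-Baxter relation for $R_2$ on $(A,\circ,\a)$, namely $R_2(p)\circ R_2(q)=R_2\big(R_2(p)\circ q+R_2(q)\circ p\big)$, with $p=R_1(x)$ and $q=y$. This rewrites the left-hand side as
$$R_2\big(R_2R_1(x)\circ y+R_2(y)\circ R_1(x)\big).$$
Comparing with the right-hand side after the commuting substitution, $R_2\big(R_2R_1(x)\circ y+R_1(x)\circ R_2(y)\big)$, the two expressions differ only in the summands $R_2(y)\circ R_1(x)$ versus $R_1(x)\circ R_2(y)$, which coincide by the commutativity of the Hom-Jordan product $\circ$. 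Hence both sides agree and the identity holds.

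The computation is genuinely short, and its only essential ingredients are the commutativity $R_1R_2=R_2R_1$, the Rota-Baxter relation for $R_2$, and the symmetry of $\circ$; the Rota-Baxter property of $R_1$ is used only upstream, to guarantee (via the earlier corollary) that $(A,\c,\a)$ is a Hom-pre-Jordan algebra in the first place, and the multiplicativity/twisting by $\a$ never enters the core step. The one point requiring care is conceptual rather than computational: correctly identifying the regular bimodule $(A,L_\c,R_\c,\a)$ so that ``Rota-Baxter operator on $(A,\c,\a)$'' unwinds to precisely the displayed identity, consistently with the conventions of Corollary \ref{PJ==>JD by rotabaxter}.
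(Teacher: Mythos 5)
Your proposal is correct and follows essentially the same route as the paper's own proof: unwind the Rota--Baxter condition on $(A,\c,\a)$, substitute $x\c y=R_1(x)\circ y$, and conclude via the commutativity $R_1R_2=R_2R_1$, the weight-zero Rota--Baxter identity for $R_2$ on $(A,\circ,\a)$, and the symmetry of $\circ$. The only difference is presentational: the paper compresses these steps into a three-line computation (leaving the use of commutativity of $\circ$ and of the operators implicit), whereas you make each ingredient, including the identification of the regular bimodule $(A,L_\c,R_\c,\a)$ and the condition $R_2\a=\a R_2$, explicit.
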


\begin{proof}
For any $x,y \in A$, we have
\begin{align*}
& R_2(x) \c R_2(y)= R_1(R_2(x)) \o R_2(y) \\
=& R_2(R_1(R_2(x)) \o y + R_1(x) \o R_2(y)) \\
=& R_2(R_2(x) \c y + x \c R_2(y)).
\end{align*}
This finishes the proof.
\end{proof}

\begin{cor}
Let $R_1$ and $R_2$ be two commuting Rota-Baxter operators
$($of weight zero$)$ on a Hom-Jordan algebra $(A, \circ,\a)$. Then there exists a Hom-J-dendriform algebra structure on $A$ given by
\begin{align}\label{commuting rota-baxter}
    x \pt y= R_1(y) \circ R_2(x),\  x \gr y=R_1R_2(x) \circ y, \quad \forall x,y \in A.
\end{align}
\end{cor}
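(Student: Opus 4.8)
The plan is to obtain the desired Hom-J-dendriform structure purely by composing three constructions already established above, so that none of the defining identities \eqref{Hom-J-dend alg1}--\eqref{Hom-J-dend alg5} has to be checked by hand. First I would feed the single Rota-Baxter operator $R_1$ into the corollary following Proposition \ref{Homjordan=>Homprejordan} (a Rota-Baxter operator of weight zero on a Hom-Jordan algebra produces a compatible Hom-pre-Jordan algebra). This equips $A$ with the Hom-pre-Jordan product
$$x \c y = R_1(x) \circ y,$$
keeping the same twist map $\a$; denote the resulting Hom-pre-Jordan algebra by $(A,\c,\a)$.

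Next I would invoke Lemma \ref{commuting rotabaxter}: because $R_1$ and $R_2$ commute, $R_2$ is a Rota-Baxter operator of weight zero on the Hom-pre-Jordan algebra $(A,\c,\a)$ just constructed. With that in hand I would apply Corollary \ref{PJ==>JD by rotabaxter}, taking $R=R_2$ and the ambient Hom-pre-Jordan algebra to be $(A,\c,\a)$. This immediately yields a Hom-J-dendriform algebra on $A$ with twist map $\a$ whose two products are
$$x \pt y = y \c R_2(x), \qquad x \gr y = R_2(x) \c y.$$

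It then only remains to unwind the definition of $\c$ in these two expressions. Substituting $x\c y=R_1(x)\circ y$ gives
$$x \pt y = y \c R_2(x) = R_1(y) \circ R_2(x),$$
$$x \gr y = R_2(x) \c y = R_1(R_2(x)) \circ y = R_1 R_2(x) \circ y,$$
which are precisely the products \eqref{commuting rota-baxter}, completing the argument.

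I do not expect a genuine obstacle here: the entire mathematical content has been absorbed into Lemma \ref{commuting rotabaxter} and Corollary \ref{PJ==>JD by rotabaxter}, and the present statement is a clean specialisation of their composition. The only points requiring care are bookkeeping ones. One must confirm that the same twist map $\a$ is carried unchanged through all three steps (each of the cited constructions preserves it), and one must respect the asymmetry of the recipe $x\pt y=y\c R(x)$ when specialising to $R=R_2$: in the vertical product $R_1$ ends up acting on the second argument $y$ while $R_2$ acts on the first argument $x$, whereas in the horizontal product the composite $R_1R_2$ acts on $x$. Matching these to the stated formulas is the whole verification.
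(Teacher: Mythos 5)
Your proposal is correct and follows essentially the same route as the paper: Lemma \ref{commuting rotabaxter} makes $R_2$ a Rota-Baxter operator on the Hom-pre-Jordan algebra $(A,\c,\a)$ with $x\c y=R_1(x)\circ y$, and Corollary \ref{PJ==>JD by rotabaxter} applied to $R=R_2$ then yields exactly the stated products after unwinding $\c$. Your explicit bookkeeping of the twist map and of which operator acts on which argument is a slightly more careful write-up of the same argument.
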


\begin{proof}
By Lemma \ref{commuting rotabaxter}, $R_2$ is Rota-Baxter operator of weight zero on $(A,\c,\a)$, where
$$x \c y= R_1(x) \o y.$$
Then, applying Corollary \ref{PJ==>JD by rotabaxter},   there exists a Hom-J-dendriform algebraic structure on $A$ given by
\begin{align*}
    x \pt y= R_1(y) \circ R_2(x),\  x \gr y=R_1R_2(x) \circ y, \quad \forall x,y \in A.
\end{align*}
\end{proof}

\end{document}